\documentclass[reqno]{amsart}
  \input amssymb.sty


\usepackage{epsfig,latexsym,amsfonts,amssymb,amsmath,amscd,graphics,epic}
\usepackage{amsfonts,amssymb,amsmath,amscd,amsthm}
\usepackage[mathscr]{eucal}
\usepackage{mathrsfs}
\usepackage{oldgerm,units}
\usepackage{wrapfig,epsfig}

\usepackage{ifthen}
\usepackage{mathbbol}

\usepackage{amsthm}
\usepackage[mathscr]{eucal}
\usepackage{mathrsfs}
\usepackage{mathbbol}
\usepackage{oldgerm,units}
\usepackage{wrapfig}





\newtheorem{theorem}{Theorem}[section]

\newtheorem{definition}[theorem]{Definition}

\newtheorem{example}[theorem]{Example}
\newtheorem{remark}[theorem]{Remark}








\newcommand{\one}{\mathbb{1}}
\newcommand{\zero}{\mathbb{0}}


\newcommand{\trop}[1]{\mathcal{#1}}

\newcommand{\tG}{\trop{G}}
\newcommand{\tH}{\trop{H}}

\newcommand{\tS}{\trop{S}}
\newcommand{\tT}{\trop{T}}


\newcommand{\To}{\longrightarrow }








\newcommand{\al}{\alpha}
\newcommand{\bt}{\beta}

\newcommand{\lm}{\lambda}










\newcommand{\OP}{\left(}
\newcommand{\CP}{\right)}





\hfuzz5pt 
\vfuzz5pt 

\pagestyle{empty}
    \ifx\proof\undefined
    \newenvironment{proof}{
    \smallskip
    \noindent\emph{Proof.}}{\hfill\(\Box\)
    \bigskip
    } \fi


\newcommand{\vvMat}[9]{\OP \begin{array}{ccc}
  #1 & #2 & #3\\
  #4 & #5 & #6\\
  #7 & #8 & #9\\
\end{array}\CP}








\newcommand{\ifdef}[3]{\ifthenelse{\equal{#1}{true}}{#2}{#3}}




\pagenumbering{arabic} \pagestyle{plain}

 \pagestyle{headings}

\def\Skip{\vskip 1.5mm}
\def\pSkip{\vskip 1.5mm \noindent}

\def\hfA{\Inu{f_A}}
\def\tfA{\widetilde{f_A}}

\newcommand{\etype}[1]{\renewcommand{\labelenumi}{(#1{enumi})}}
\def\eroman{\etype{\roman}}

\newcommand{\Det}[1]{ \left|{#1}\right|}

\def\essn{{\operatorname{es}}}

\def\Bl{B_\ell}

\def\nucong{\cong_\nu}
\def\nug{>_\nu}
\def\nul{<_\nu}
\def\nuge{\ge_\nu}
\def\nule{\le_\nu}

\def\pipeGS{{\underset{\operatorname{\, gs }}{\mid}}}

\def\lmod{\mathrel   \pipeGS \joinrel\joinrel \joinrel =}
\def\lmodh{\lmod}

\def\lmodg{\lmod}

\def\tGz{\mathcal G_\zero}
\def\tTz{\mathcal T_\zero}
\def\tHz{\mathcal H_\zero}

\def\tH{\mathcal H}

\def\FunR{\operatorname{Fun} (R^{(n)},R)}
\def\CFunR{\operatorname{CFun} (R^{(n)},R)}

\def\({\left(}
\def\){\right)}

\def\multi{multicycle}
\def\nmulti{$n$-multicycle}

\newcommand{\tGinf}{\tG_{\zero}}

\def\hnu{\hat \nu}
\newcommand{\inua}[1]{\widehat{#1}}
\newcommand{\inu}[1]{\hat{#1}}
\newcommand{\Inu}[1]{\widehat{#1}}
\newcommand{\Bnu}[1]{\overline{#1}}

\def\RGnu{(R,\tGz,\nu)}

\def\cyc{C}

\def\quasi{quasi}


\textwidth 160mm \textheight 230mm \topmargin -2mm \evensidemargin
0mm \oddsidemargin 0mm

\def\ghost{\text{ghost}}

\def\a{\alpha}
\def\la{\lambda}
\def\sig{\sigma}

\def\one{\mathbb{1}}
\def\zero{\mathbb{0}}

\def\regular{nonsingular}

\def\um{I}

\def\nb{\nabla}
\def\nbnb{{\nb \nb}}
\def\Bnb{{\Bnu{\nabla}}}
\def\Bnbnb{{\Bnb \Bnb}}
\def\Tnb{{\Inu{\nabla}}}

\def\ra{a}
\def\bfa{\textbf{\ra}}

\newcommand{\adj}[1]{\operatorname{adj}(#1)}
\newcommand{\Tadj}[1]{{\Inu{\operatorname{adj}(#1)}}}

\def\rone{\one_R}
\def\rzero{\zero_R}

\def\fzero{\zero_F}


\def\la{\lambda}

\newtheorem{thm}[theorem]{Theorem}
\newtheorem*{thm*}{Theorem}
\newtheorem*{dig*}{Digression}

\newtheorem{cor}[theorem]{Corollary}

\newtheorem{lem}[theorem]{Lemma}
\newtheorem{rem}[theorem]{Remark}
\newtheorem{prop*}{Proposition}

\newtheorem{prop}[theorem]{Proposition}
\newtheorem{defn}[theorem]{Definition}
\newtheorem*{examp*}{Example}
\newtheorem*{examples*}{Examples}
\newtheorem*{remark*}{Remark}

\newtheorem*{defn*}{Definition}
\newtheorem*{note*}{Note}

\begin{document}
\title[Supertropical Matrix Algebra II: Solving tropical equations] {Supertropical Matrix Algebra
II: \\ \vskip 1mm Solving tropical equations}

\author[Zur Izhakian]{Zur Izhakian}\thanks{The first author has been supported by the
Chateaubriand scientific post-doctorate fellowships, Ministry of
Science, French Government, 2007-2008}
\thanks{This research is supported in part by the
Israel Science Foundation, grant No. 448/09.}
\address{Department of Mathematics, Bar-Ilan University, Ramat-Gan 52900,
Israel}
\email{zzur@math.biu.ac.il}
\author[Louis Rowen]{Louis Rowen}
\address{Department of Mathematics, Bar-Ilan University, Ramat-Gan 52900,
Israel} \email{rowen@macs.biu.ac.il}

\thanks{\textbf{Acknowledgement:} The authors would like to thank the
referee for his helpful comments, especially in asking whether the function $\hat \nu$
can be taken to be multiplicative.}

\subjclass[2000]{Primary 15A09, 15A03, 15A15, 65F15; Secondary
16Y60 }

\date{\today}

\keywords{Supertropical matrix algebra, Adjoint matrix, Cramer's
rule, Solving linear equations, Supertropical eigenvectors.}


\begin{abstract}
We continue the study of matrices over a supertropical algebra,
proving the existence of a tangible adjoint of $A$, which provides
the unique right (resp.~left) quasi-inverse   maximal with respect
to the right (resp.~left) quasi-identity matrix corresponding to
$A$; this provides a unique maximal (tangible) solution to
supertropical vector equations, via a version of Cramer's rule. We
also describe various properties of this tangible adjoint, and use
it to compute supertropical eigenvectors, thereby producing an
example in which an $n\times n$ matrix has $n$ distinct
supertropical eigenvalues but their supertropical eigenvectors are
tropically dependent.

\end{abstract}

\maketitle




\section{Introduction}
\numberwithin{equation}{section}

This paper is a continuation of \cite{IzhakianRowen2008Matrices};
here, we solve vector equations in supertropical algebra,
using the tangible version $\adj{A}$
of the adjoint, which yields a
version of Cramer's rule (Theorem \ref{tangsol} below). This
solution is the unique maximal solution in a certain sense (Theorem \ref{tang3}). In \S
4 we compare $\adj{\adj{A}}$ to $A$. These computational techniques using the adjoint are quite powerful; in Theorem~\ref{sing12},
we apply them to compute supertropical eigenvectors and to refute the natural conjecture that the supertropical eigenvectors would be tropically independent when their supertropical eigenvalues are distinct.

Some of the parallels to classical matrix theory are quite
unexpected, especially since their natural analogs in the max-plus
algebra often fail. See \cite{ABG} for some of the max-plus
theory; related references are given in the bibliography of
\cite{IzhakianRowen2008Matrices}.  However, the supertropical
algebra, which covers the max-plus algebra, is endowed with the ``ghost surpassing''
relation $\lmodg$ given in Definition \ref{lmod}, which
specializes to equality on the ``tangible'' elements, and provides
suitable analogs of these basic results from matrix theory.

The paper \cite{AGG} was written independently of the earlier
version of this paper, and contains some relevant results,
especially an elegant meta-theorem about identities of matrix
semirings described in Section~\ref{matid} below. During the
course of the current version of this paper, we indicate how the
results of~\cite{AGG} interact with our results.

We recall that this work is in the environment of a
 \textbf{semiring with ghosts} \cite{IzhakianRowen2007SuperTropical}, which is a
triple $\RGnu,$ where $R$ is a semiring with zero element,
$\zero_R$, (often identified in the examples with $-\infty$, as
indicated below), and $\tGinf = \tG \cup \{ \zero_R \} $ is a
semiring ideal, called the \textbf{ghost ideal}, together with an
idempotent semiring  homomorphism
$$\nu : R \ \To \ \tG\cup \{\zero_R\}$$  called the \textbf{ghost map}, i.e.,
which preserves multiplication as well as addition, defined as
 $$\nu (a) = a+a
 .$$

 We write $a^{\nu }$ for $\nu(a)$, called
the $\nu$-\textbf{value}  of $a$. We write $a \nuge  b$, and say
that $a$ \textbf{dominates} $b$, if $a^\nu \ge b^\nu$. Likewise we say
that $a$   \textbf{stricly dominates} $b$, written $a >_ \nu  b$, if $a^\nu >  b^\nu$. Two
elements $a$ and $b$ in $R$ are said to be
$\nu$-\textbf{matched} if they have the same $\nu$-value, in
which case we also
write $a \nucong b$.

\subsection{Supertropical semirings}

\begin{defn}
A \textbf{supertropical semiring} is a semiring with ghosts that
has the extra properties:
\begin{enumerate} \eroman
 \item  $a+b   =  a^{\nu } \quad \text{if}\quad a^{\nu } =
 b^{\nu}$; \pSkip
 \item $a+b  \in \{a,b\},\ \forall a,b \in R \;s.t. \; a^{\nu }
\ne b^{\nu }.$ (Equivalently, $\tGinf$ is ordered, via $a^\nu \le
b^\nu$ iff $a^\nu + b^\nu = b^\nu$.) \pSkip

\end{enumerate}

  A \textbf{supertropical domain} (the focus of interest for us) is a commutative supertropical
semiring $R = \RGnu$ in which the following two extra conditions are
satisfied:
\begin{enumerate} \eroman
    \item  $R\setminus \tGinf$  is a monoid~$\tT $ with respect to the semiring multiplication;
the elements of $\tT$ are called  \textbf{tangible}.  \Skip
       \item The map $\nu _\tT : \tT \to \tG$ (defined as the restriction from $\nu$ to $\tT$) is onto; in other words, every element of $\tG$ has the form
$a^\nu$ for some $a\in \tT$.  \Skip
   \end{enumerate} \end{defn}

    We write $\tTz$ for $\tT \cup \{ \rzero\}$. Note that  $\tTz$  acts as the max-plus algebra, except in the
case when $a^\nu = b^\nu$, in which case the ghost layer plays its
role.

\begin{defn}
A \textbf{supertropical semifield} is a supertropical domain
$\RGnu$ in which every  tangible element is invertible; in other
words, $\tT$ is a multiplicative group. Thus, $\tG$ is also a multiplicative group.
\end{defn}

Recall from \cite[Remark 3.12]{IzhakianRowen2007SuperTropical}
that any supertropical domain $R$ is $\nu$-\textbf{cancellative},
in the sense that $c a^\nu = c b^\nu$ for $c \ne \rzero$ implies
$a^\nu = b^\nu,$ and in particular its ghost ideal $\tG$ is
cancellative as a multiplicative monoid.
 Since any commutative cancellative monoid has an
Abelian group of fractions, one often can reduce from the case of
a supertropical domain to that of a supertropical semifield. (More
details are given in \cite[Proposition~3.19 and Remark~3.20]
{IzhakianRowen2007SuperTropical}.)

\subsection{The supertropical relation ``ghost surpasses''}

The following  relation, stronger than $\nuge$, plays a key role
in the theory, and especially in this paper.

\begin{definition}\label{lmod} We say $b = a + \ghost $ if $b = a + c$ for $c$ some   ghost
element.
 We define the relation $\lmodg$, called ``ghost surpasses,'' on any
semiring with ghosts $R$, by $$b \lmodg a \qquad \text{ iff }
\qquad b = a +
  \ghost.$$
\end{definition}

Note that $b \lmodg
 a$  implies $a + b \in \tGz.$
In a supertropical semiring, $b \lmodg a$ iff $b = a$ or $b$ is a
ghost $\nuge a.$ In particular, if $b \lmodg  a$, then $b \nuge
a$. (The converse is false, since one could have tangible $b \nug
a$.) In fact  the relation $\lmodg$ is a partial order on $R$, and
is not symmetric; for example $a^\nu \lmodg a$, for $a$ tangible,
but not visa versa; i.e., $a \not\lmodg a^ \nu$.

\begin{rem}\label{antitan} In a supertropical domain, if $a$ is tangible with $a \lmodg
 b$ , then $a= b.$  (Indeed, write $a = b+c$ with $c\in \tGz.$ Then $b \not \nucong c$
 since $a \in \tT,$ and likewise $a\ne c,$ since $a$ is tangible,  so $a=b$.) \end{rem}

   Thus, for tangible elements, the relation $\lmodg$ generalizes equality in the
max-plus algebra, and seems to be the ``correct'' generalization
  to enable us to find analogs of theorems from classical
linear algebra. This is the reason for our use of the symbol
$\lmodg$, not to be
confused with the usage in model theory. On the other hand, we
have the following observation.

\begin{lem} The relation  $\lmodg$ is antisymmetric in any supertropical semiring.
\end{lem}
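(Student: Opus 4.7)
The plan is to prove antisymmetry by a short case analysis based on the tangible/ghost dichotomy that the supertropical axioms provide. Suppose $a \lmodg b$ and $b \lmodg a$; then by definition there exist $c,d \in \tGz$ with $a = b + c$ and $b = a + d$. We want to conclude $a = b$.

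First, if either $a$ or $b$ is tangible, we are essentially done by \remref{antitan}: that remark says that $a$ tangible and $a \lmodg b$ force $a = b$, and the symmetric statement is immediate. So the only remaining case is when both $a$ and $b$ lie in $\tGz$. In this case I would argue as follows. From $a \lmodg b$ and $b \lmodg a$ we obtain $a \nuge b$ and $b \nuge a$, hence $a^\nu = b^\nu$. But $a$ and $b$ are themselves ghost (or zero) elements, so $a = a^\nu$ and $b = b^\nu$, giving $a = b$ directly.

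The one subtlety worth checking carefully is that in the pure-ghost case the equalities $a = a^\nu$ and $b = b^\nu$ really do hold; this follows since $\tGz$ is the image of the idempotent map $\nu$, so every element of $\tGz$ is $\nu$-fixed. Alternatively, one can invoke the supertropical axiom $a + a = a^\nu$ together with $a\in\tGz$ to identify $a$ with its $\nu$-value. None of these pieces is deep — the main (mild) obstacle is simply being careful not to conflate $a\nuge b$ with $a\lmodg b$, since the relation $\lmodg$ is strictly stronger on tangible elements. With the case split above, antisymmetry follows in each case, completing the proof.
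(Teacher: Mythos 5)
Your proof is correct and follows essentially the same route as the paper's: dispatch the case where one of $a,b$ is tangible via Remark~\ref{antitan}, then reduce to $a,b\in\tGz$ and observe $a=a^\nu=b^\nu=b$. You supply slightly more detail than the paper (explicitly deriving $a^\nu=b^\nu$ from $a\nuge b$ and $b\nuge a$, and flagging that ghost elements are $\nu$-fixed), but the argument is the same.
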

\begin{proof} We need to show that if $a \lmodg  b$ and $b \lmodg a$, then $a = b.$ This holds by Remark~\ref{antitan} if $a$ is tangible (and thus,
by symmetry, if $b$ is tangible). Hence, we may assume that $a,b
\in \tGz,$ in which case $$a = a^\nu = b^\nu = b.$$\end{proof}

\subsection{The tangible retract function}

Although in general, the map $\nu_\tT : \tT \to \tG$ need not be
1:1 in a supertropical domain, $\nu_\tT $ is onto by definition;
we find it convenient to choose a ``tangible retract'' function
$\hnu : R \to \tTz$ restricting to the identity map on $\tTz$,
such that
 $\nu \circ \hnu$ restricts to the identity
map on $\tGz$. We write $\inu{ b}$ for~$\inu \nu(b)$; thus,
$(\inu{ b})^\nu = b$ for all $b\in \tGz$. We retain the notation
$\hnu$ when working with more complicated expressions.

We do not see any general way to define $\hnu$  on $\tG$ other
than applying the axiom of choice rather freely, although in
special cases there are canonical definitions for $\hnu$ (such as
when $\nu$ is 1:1 or a ``lowest term'' valuation on power series).

 \begin{prop}\label{multip}  If $F$ is a divisibly closed semifield, the map
 $\hnu : F \to \tT$ can be defined such that its restriction
 $\hnu _\tG: \tG \to \tT$ is a multiplicative group homomorphism.\end{prop}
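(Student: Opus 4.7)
The plan is to construct $\hnu_\tG$ as a multiplicative section of the surjection $\nu_\tT : \tT \to \tG$; having done so, $\hnu : F \to \tTz$ is determined by taking it to be the identity on $\tTz$ and sending $\zero_R$ to $\zero_R$. Two structural features of the supertropical semifield $F$ power the construction. First, ``divisibly closed'' means the multiplicative group $\tT$ is divisible, so its homomorphic image $\tG$ is divisible as well. Second, the definition of a supertropical semiring builds in a total order on $\tGz$ compatible with multiplication, so $\tG$ is a totally ordered abelian group, hence torsion-free.

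With these in hand, I would apply Zorn's lemma to the poset $\cS$ of pairs $(H,\varphi)$, where $H \le \tG$ is a subgroup and $\varphi : H \to \tT$ is a group homomorphism with $\nu_\tT \circ \varphi = \operatorname{id}_H$, ordered by extension. The pair $(\{\one^\nu\},\, \one^\nu \mapsto \one)$ shows $\cS$ is nonempty, and the union of a chain is readily an upper bound, yielding a maximal $(H_0, \varphi_0)$. Suppose, for contradiction, that $g \in \tG \setminus H_0$. If no positive power of $g$ lies in $H_0$, then $\langle H_0,g\rangle \cong H_0 \times \langle g\rangle$ and one may pick any preimage $t \in \tT$ of $g$ under $\nu_\tT$ and extend by $\varphi_0'(hg^k) := \varphi_0(h)\, t^k$. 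Otherwise, let $m$ be the least positive integer with $g^m \in H_0$; I need $t \in \tT$ with $t^\nu = g$ and $t^m = \varphi_0(g^m)$. By divisibility of $\tT$ pick $t_0$ satisfying $t_0^m = \varphi_0(g^m)$; then $(t_0^\nu)^m = \varphi_0(g^m)^\nu = g^m$, so $t_0^\nu \cdot g^{-1}$ is an $m$-torsion element of $\tG$, and torsion-freeness forces $t_0^\nu = g$. Thus $t_0$ serves, and $\varphi_0'(hg^k) := \varphi_0(h)\, t_0^k$ well-definedly extends $\varphi_0$ to $\langle H_0,g\rangle$, contradicting maximality.

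The delicate step, and the reason for both hypotheses, is the cyclic extension: divisibility of $\tT$ supplies an $m$-th root of $\varphi_0(g^m)$, and torsion-freeness of $\tG$ guarantees that this root already has the prescribed $\nu$-value $g$. Without torsion-freeness one would instead have to correct $t_0$ by a preselected $m$-th root inside $\ker(\nu_\tT)$, which would require $\ker(\nu_\tT)$ itself to be divisible — a property not supplied by divisible closure of $F$ alone, as the classical non-split sequence $0 \to \Z \to \Q \to \Q/\Z \to 0$ shows. The supertropical order axiom on $\tGz$ is exactly what makes the construction go through.
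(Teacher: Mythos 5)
Your construction is essentially the same as the paper's: Zorn's lemma on the poset of partial multiplicative sections of $\nu_\tT$, followed by a one-element extension, split into the case where the chosen $g$ has some positive power in the maximal subgroup (use divisibility of $\tT$ to take a root) and the case where it has none (pick any preimage freely). The paper's proof, however, does not explicitly verify that the $k$-th root $\hat a$ of $\Inu{a^k}$ supplied by divisibility has $\nu$-value equal to $a$; you supply exactly the missing justification by observing that $\tG$, being totally ordered (axiom (ii) of a supertropical semiring) and a group, is torsion-free, so $(\hat a)^\nu \cdot a^{-1}$ is a torsion element and must be trivial. This is a genuine improvement in rigor, and your remark that divisibility of $\ker(\nu_\tT)$ would otherwise be needed (and is not available) correctly identifies why the order axiom is load-bearing.
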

 \begin{proof}  Consider all pairs $(M, \hnu)$, where $M \subset (\tG,\cdot)$
 is a subgroup with a partial tangible retract function $\hnu_M: M \to \tTz$ that is multiplicative.
 We order these pairs by saying $$(M, \hnu_M)> (M', \hnu_{M'}) \quad \text{if} \quad M \supset
 M'$$
 and $ \hnu_M$ restricts to $\hnu_{M'}$ on $M'$, i.e., ${ \hnu_M |} _{M'} = \hnu_{M'}$.

 By Zorn's lemma, there is a subgroup $M \subset (\tG,\cdot)$ for which $(M,\hnu_M)$ is maximal.
  If $M \ne \tG,$ then take $a\in \tG \setminus M.$  Let $$P =  \{ n\in \mathbb Z: a^n \in
  M\},$$
  an  ideal of $\mathbb Z,$ and write $P = k\mathbb Z$  for some    $k \ge 0$.
   If $k>0$, choose $\hat a$ such that $\hat a^k = \Inu{a^k}$.
(This is possible since $F$ is divisibly closed.)
 If $k = 0$, choose
 $\hat a$ arbitrarily in~$\tT$ such that $(\inu{ a})^\nu = a$.
 Define $$\Inu{a^ib} := \Inu{a^i} \hat b$$ for each $b\in M$ and each $i>0$. To see that this
 is well-defined, suppose $a^i b = a^j b'$ for $i \ge j$ and $b' \in M$. Then $a^{i-j} =  b'  b^{-1}\in M,$ which by
 definition is $\Inu{a^{i-j}} = \hat a^{i-j}$ since $k$ divides $i-j$,
  implying  $\hat a^{i}\hat b = \hat a^{j} \widehat{b'}.$ This contradicts the maximality
  of $M$, so we must have $M = \tG.$ Then we put $\Inu{\fzero} = \fzero.$
 \end{proof}

\begin{rem}   Whenever $a \not \nucong b$, the retract map   $\hnu$ must satisfy
 \begin{equation}\label{hatadd} \Inu{a+b} =  \hat a + \hat b. \end{equation}  Indeed, we may assume that $a >_\nu b,$ and thus
 $\Inu{a+b} = \hat a = \hat a + \hat b  .$

 But for  $a \in \tG $, we have $\hat a + \hat a = (\hat a)^\nu = a$ which is not $ \hat a = \widehat{a+a},$
 so $\hnu$   is not a semiring homomorphism.
\end{rem}

 The
following observation enables us to utilize   $\hnu$ to make
calculations  paralleling those in the max-plus algebra.

\begin{prop}\label{tang} If $ \sum _k a_k \Inu{b_{j,k}}  \in \tGz$
for each $1 \le j\le m$, then  $ \sum _{k} {a_k}\hnu(\sum _{j=1}^m
{b_{j,k}c_j})\in \tGz$ for any $c_j \in R$.
\end{prop}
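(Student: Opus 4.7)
The plan is to analyze the dominance structure of $\sum_k a_k \hnu(S_k)$, where $S_k := \sum_{j=1}^m b_{j,k} c_j$, by tracking $\nu$-values. Since $\hnu$ is a retract, $\hnu(S_k) \nucong S_k$, so each summand $a_k \hnu(S_k)$ is $\nu$-matched with $a_k S_k = \sum_j a_k b_{j,k} c_j$; hence the dominance pattern in $\sum_k a_k \hnu(S_k)$ is controlled by the maxima of $(a_k b_{j,k} c_j)^\nu$ over $(j,k)$.

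Set $M := \max_{j,k} (a_k b_{j,k} c_j)^\nu$ and $K^* := \{k : (a_k S_k)^\nu = M\}$. The case $M = \rzero$ forces every $S_k = \rzero$ and is trivial, so assume $M \ne \rzero$. If $|K^*| \ge 2$, then two summands of $\sum_k a_k \hnu(S_k)$ attain the $\nu$-value $M$, and by the supertropical axiom $x + y = x^\nu$ whenever $x \nucong y$, the sum collapses to a ghost of value $M$, which lies in $\tGz$. Otherwise $K^* = \{k^*\}$, the sum $\sum_k a_k \hnu(S_k)$ equals its unique dominant summand $a_{k^*} \hnu(S_{k^*})$, and it suffices to show $a_{k^*} \hnu(S_{k^*}) \in \tGz$. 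For this, I plan to pick $j^*$ with $(a_{k^*} b_{j^*,k^*} c_{j^*})^\nu = M$ and invoke the hypothesis at $j = j^*$.

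The key step is to verify that $k^*$ is the \emph{unique} dominant index in the hypothesis sum $\sum_k a_k \Inu{b_{j^*,k}}$. For any $k \ne k^*$,
\[(a_k b_{j^*,k} c_{j^*})^\nu \;\le\; (a_k S_k)^\nu \;<\; M \;=\; (a_{k^*} b_{j^*,k^*} c_{j^*})^\nu;\]
since $M \ne \rzero$ forces $c_{j^*} \ne \rzero$, $\nu$-cancellativity strips the factor $c_{j^*}^\nu$, yielding $(a_k b_{j^*,k})^\nu < (a_{k^*} b_{j^*,k^*})^\nu$, and then $\Inu{b_{j^*,k}} \nucong b_{j^*,k}$ transfers this to $(a_k \Inu{b_{j^*,k}})^\nu < (a_{k^*} \Inu{b_{j^*,k^*}})^\nu$. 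Hence $a_{k^*} \Inu{b_{j^*,k^*}}$ is the unique dominant term of $\sum_k a_k \Inu{b_{j^*,k}}$, so for this sum to lie in $\tGz$ (as the hypothesis guarantees) the dominant term itself must be in $\tGz$; since $\Inu{b_{j^*,k^*}}$ is a nonzero tangible element, this forces $a_{k^*} \in \tGz$, whence $a_{k^*} \hnu(S_{k^*}) \in \tGz$.

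The main obstacle is the bookkeeping in the second case: one must verify that strict dominance at $k^*$ survives the removal of the factor $c_{j^*}$ and the replacement of $b_{j^*,k}$ by $\Inu{b_{j^*,k}}$. This rests on $c_{j^*} \ne \rzero$ (guaranteed by $M \ne \rzero$) and on $\nu$-cancellativity of the ambient supertropical domain; beyond this, only the supertropical axioms and the defining properties of $\hnu$ are needed.
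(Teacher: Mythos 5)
Your proof is correct and takes essentially the same approach as the paper's: isolate the dominant pair $(j^*,k^*)$, use $\nu$-cancellativity to strip off the factor $c_{j^*}$, and invoke the hypothesis at $j=j^*$. The paper phrases the final step as a contradiction (assuming a tangible single dominant term and producing a second $\nu$-matched one $a_{k_2}\Inu{b_{j^*,k_2}}$), while you argue directly that in the unique-dominant case the strict dominance forces $a_{k^*}\in\tGz$; these are two phrasings of the same observation.
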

\begin{proof}  Otherwise, consider the single dominating term ${a_{k_1}}
\Inu{({b_{{j_1},{k_1}}c_{j_1}})}$ of the right side.  We are done
unless $a_{k_1} \in \tT.$  But  ${a_{k_1}}
\Inu{({b_{{j_1},{k_1}}c_{j_1}})}$ dominates ${a_{{k}}}
\Inu{({b_{{j_1},{k}}c_{j_1}})}$ for each ${k}$, implying
${a_{k_1}} \Inu{b_{{j_1},{k_1}}}\in \tT$ dominates each $
a_{{k}} \Inu{b_{{j_1},{k}}}$. Thus, there must be~${k_2}$ with
${a_{k_1}} \Inu{b_{{j_1},{k_1}}}\nucong {a_{{k_2}}}
\Inu{b_{{j_1},{k_2}}}$. But then $${a_{k_1}}
\Inu{({b_{{j_1},{k_1}}c_{j_1}})}\nucong {a_{{k_2}}}\Inu{
{(b_{{j_1},{k_2}}c_{j_1}})}$$ implying that their sum is ghost.
\end{proof}

\begin{prop}\label{tang0}  $\sum _k a_k \sum _j \Inu{b_{j,k}} \lmodg  \sum _k a_k  \Inu{(\sum _j
b_{j,k})}
 .$
\end{prop}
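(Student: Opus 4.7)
My plan is first to reduce to a fixed outer index $k$, and then to do a case analysis on whether the maximum $\nu$-value among the $b_{j,k}$ for that $k$ is uniquely attained. The reduction rests on the observation that $\lmodg$ is additive: if $x_k = y_k + g_k$ with each $g_k$ ghost, then $\sum_k x_k = \sum_k y_k + \sum_k g_k$ with $\sum_k g_k$ still ghost (since $\tGz$ is an additive ideal). So it suffices to show, for each fixed $k$, that
\[
a_k \sum_j \Inu{b_{j,k}} \ \lmodg \ a_k\, \Inu{\bigl(\textstyle\sum_j b_{j,k}\bigr)}.
\]

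Fix $k$ and set $\mu_k := \max_j b_{j,k}^\nu$. If $\mu_k$ is attained by a unique index $j_0$, then every other $b_{j,k}$ (and hence every other $\Inu{b_{j,k}}$) is strictly $\nu$-dominated by $b_{j_0,k}$, so iterating the identity $\Inu{a+b}=\hat a+\hat b$ for $a \not\nucong b$ yields $\sum_j b_{j,k} = b_{j_0,k}$ and $\sum_j \Inu{b_{j,k}} = \Inu{b_{j_0,k}} = \Inu{\sum_j b_{j,k}}$; multiplying by $a_k$ gives outright equality of the two sides. If instead $\mu_k$ is attained by two or more indices, then $\sum_j b_{j,k}$ is a ghost of $\nu$-value $\mu_k$, so its retract is tangible of $\nu$-value $\mu_k$ and the right-hand side has $\nu$-value $a_k^\nu \mu_k$. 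On the left, at least two of the summands $\Inu{b_{j,k}}$ are tangibles of $\nu$-value $\mu_k$, so they combine to a ghost which dominates the remaining terms; hence $\sum_j \Inu{b_{j,k}}$ is itself a ghost of $\nu$-value $\mu_k$, and multiplying by $a_k$ (using that $\tG$ is an ideal) gives a ghost of $\nu$-value $a_k^\nu\mu_k$ on the left. Since the two sides have matching $\nu$-value and the left side is ghost, adding the right to the left leaves the left unchanged, which is precisely the relation $\lmodg$.

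The main obstacle will be pinning down the unique-max case when the dominating $b_{j_0,k}$ is itself a ghost: the retract $\Inu{b_{j_0,k}}$ is then tangible of $\nu$-value $\mu_k$ but still strictly dominates the other $\Inu{b_{j,k}}$, so the collapse argument goes through; this needs to be spelled out carefully. Everything else is bookkeeping with the supertropical axioms and the additivity of the ghost-surpasses relation.
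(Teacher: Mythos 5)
Your proof is correct and takes a genuinely different route from the paper's. The paper's proof is a two-line global argument: it first notes that the two sides are $\nu$-matched, which reduces the claim to checking that whenever the left side is tangible it equals the right side; then, if $a_k\Inu{b_{j',k}}$ alone dominates the left-hand sum, it deduces $b_{j,k}<_\nu b_{j',k}$ for $j\ne j'$, so that $\Inu{\sum_j b_{j,k}}=\Inu{b_{j',k}}$ and the same term dominates the right side. You instead factor the claim through the observation that $\lmodg$ is preserved under summation (because $\tGz$ is closed under addition), reduce to a single outer index $k$, and then split on whether $\max_j b_{j,k}^\nu$ is attained uniquely: you get equality in the unique-max case and a ghost on the left in the tied case. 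Your route is more modular and actually proves a slightly stronger statement (the relation holds term-by-term in $k$, not merely in the aggregate), and the additivity lemma for $\lmodg$ is a reusable observation; the paper's route is shorter because the ``if the left side is tangible then it equals the right side'' reduction collapses your two cases into one at the outset. One small stylistic point: your phrase ``iterating the identity $\Inu{a+b}=\hat a+\hat b$'' is a little loose when some of the dominated $b_{j,k}$ tie with one another, but the conclusion $\sum_j b_{j,k}=b_{j_0,k}$ and $\sum_j\Inu{b_{j,k}}=\Inu{b_{j_0,k}}$ follows directly from strict domination by $b_{j_0,k}$, so nothing is actually missing.
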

\begin{proof} The two sides are $\nu$-matched, so it remains to
show that if the left side is tangible, then it equals the right
side.

Suppose that $a_k \Inu{b_{j',k}}$ alone dominates the left side.
Then $  b_{j,k} <_\nu  b_{j',k}$ for each $j \ne j',$ implying
$\sum _j b_{j,k} = b_{j',k}.$ Hence,
  the
single dominating term in the summation at the right must also be
$a_k \Inu{ b_{j',k}}$.
\end{proof}

\subsection{Vectors}

 We also recall
the definition of $R^{(n)}$ as the Cartesian product $\prod
_{i=1}^n R$ of $n$ copies
     of the  supertropical semiring $R$, viewed as a module
     via   componentwise multiplication,
    with zero element $\zero = ( \rzero)$
    and ghost submodule $\tHz = \tGz^{(n)}$. Let $\tH = \tHz \setminus \{\rzero\}.$
    When $R$ is a supertropical semifield, $R^{(n)}$~is called a
\textbf{tropical vector space} over $R$. A vector $\ne \zero$ is
called \textbf{tangible} if all of its components are in~$\tTz.$

Our partial orders $\lmodg$ and $\nuge$ on $R$, and the tangible
retract function $\hnu: R \to \tTz $, extend respectively to the
partial orders $\lmodh$ and $\nuge$
 on $R^{(n)}$, and the tangible
retract function $\hnu : R^{(n)} \to \tTz^{(n)}$, by matching the
corresponding
 components;
 note that
vectors $v,w$ satisfy $w \lmodh  v$ iff $w = v+\ghost.$ For example,
$$  (\rone^\nu,\rone^\nu,\rone^\nu )  \  \lmodh  \
(\rone^\nu,\rone^\nu,\rone) \ \lmodh  (\rone^\nu,\rone,\rone ) .$$
Also, by checking components, we see
 that $\lmodh $  is   antisymmetric for vectors.

\begin{lem} Suppose $v,w \in R^{(n)}$, with $w$ tangible.
Then $v \lmodh  w$ iff $v+w \in \tHz$.
\end{lem}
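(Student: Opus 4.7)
The plan is to reduce the vector statement to a scalar one. Since $\lmodh$ on $R^{(n)}$ is defined componentwise and $\tHz = \tGz^{(n)}$, we have $v \lmodh w$ iff $v_i \lmodg w_i$ for each $i$, and $v + w \in \tHz$ iff $v_i + w_i \in \tGz$ for each $i$. So it suffices to show, for any $w_i \in \tTz$ and $v_i \in R$, that $v_i \lmodg w_i$ iff $v_i + w_i \in \tGz$.

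The ``$\Rightarrow$'' direction is a one-line computation: if $v_i = w_i + c_i$ with $c_i \in \tGz$, then $v_i + w_i = w_i^\nu + c_i \in \tGz$, using that $\tGz$ is a semiring ideal containing $\nu(w_i)$.

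For ``$\Leftarrow$'', I would case-split on $w_i$. The case $w_i = \rzero$ is immediate: $v_i = v_i + w_i \in \tGz$, and $v_i = w_i + v_i$ exhibits $v_i \lmodg w_i$. When $w_i \in \tT$, the supertropical axioms dictate three subcases according to how $v_i^\nu$ compares with $w_i^\nu$. Axiom (ii) rules out $v_i^\nu < w_i^\nu$, since then $v_i + w_i$ would equal the tangible $w_i$, contradicting $v_i + w_i \in \tGz$. If $v_i^\nu > w_i^\nu$, then $v_i + w_i = v_i$, so $v_i$ itself is ghost and $v_i = w_i + v_i$ gives $v_i \lmodg w_i$. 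If $v_i^\nu = w_i^\nu$, axiom (i) gives $v_i + w_i = v_i^\nu$; when $v_i \in \tGz$ one has $v_i = v_i^\nu$ and the same formula $v_i = w_i + v_i$ works, while when $v_i \in \tT$ two $\nu$-matched tangibles whose sum is ghost must coincide, so $v_i = w_i$ and $v_i \lmodg w_i$ holds trivially. The only real delicacy is this last subcase, where the uniqueness-of-tangible-lift feature underlying Remark~\ref{antitan} is genuinely invoked; all other steps are direct applications of the supertropical axioms and the definitions of $\lmodg$, $\tHz$, and $\lmodh$.
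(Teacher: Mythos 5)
Your componentwise reduction and case analysis on $\nu$-values match the paper's approach; the paper compresses the case analysis into a single line, stating that $v_i + w_i \in \tGz$ with $w_i$ tangible forces ``$v_i = w_i$ or $v_i$ is ghost $\nuge w_i$.'' The $(\Rightarrow)$ direction and most of your $(\Leftarrow)$ subcases are sound, but the delicate subcase you flag, $v_i, w_i \in \tT$ with $v_i \nucong w_i$, contains an unjustified step. You assert that ``two $\nu$-matched tangibles whose sum is ghost must coincide,'' but the sum of any two $\nu$-matched elements is automatically ghost by supertropical axiom (i), so the qualifier ``whose sum is ghost'' adds nothing, and the claim amounts to injectivity of $\nu_\tT : \tT \to \tG$. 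That is not an axiom of a supertropical domain; the paper itself remarks that $\nu_\tT$ ``need not be 1:1.''

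Remark~\ref{antitan}, which you cite as the basis for this step, runs in the opposite direction: it takes $v_i \lmodg w_i$ as a hypothesis and concludes $v_i = w_i$ for $v_i$ tangible, so it cannot be used to convert $v_i \nucong w_i$ into $v_i = w_i$. In fact, if $v_i \neq w_i$ were distinct $\nu$-matched tangibles, Remark~\ref{antitan} would give $v_i \not\lmodg w_i$ even though $v_i + w_i = v_i^\nu \in \tGz$, so the $(\Leftarrow)$ implication would fail there. To be fair, the paper's own proof glosses over exactly the same subtlety, so the lemma as stated appears to rest on the tacit assumption that $\nu_\tT$ is injective (as it is in the standard extended tropical semiring). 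You were right to single out this subcase as the delicate one; the missing recognition is that what you need there is an extra hypothesis rather than a consequence of Remark~\ref{antitan}.
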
\begin{proof} $(\Rightarrow)$ is obvious.

$(\Leftarrow)$ By assumption,  each component $w_i$ of $w$ is in
$\tTz$, and $v_i+w_i \in \tHz$ implies $v_i = w_i$ or $v_i$ is
ghost $\nuge w_i;$ thus $v_i \lmodg w_i$ for each $i$, implying $v
\lmodh  w.$
\end{proof}

\subsection{The $\nu$-topology}

We also need the following topology on $R$;~cf.~\cite[Definition
3.22]{IzhakianRowen2007SuperTropical}:

\begin{defn}\label{ordertop}  Suppose $\RGnu$ is a supertropical domain.
 Viewing $\tG$ as an ordered monoid with respect to $\nuge$,
we define the $\nu$-\textbf{topology} on $R$, whose open sets have
a base comprised of the \textbf{open intervals}
$$W _{\alpha, \beta} = \{ a \in R: \alpha < a^\nu <
\beta\}; \qquad W _{\a, \beta; \tT} = \{ a \in \tT: \a < a^\nu <
\beta\}, \quad \alpha, \beta \in \tG_\zero.$$ This topology
extends to the product topology on $R^{(n)}$ for any $n$.
\end{defn}
\noindent  Note that the tangible vectors in $R^{(n)}$ are a dense
subset in
 the $\nu$-topology.
 When we need to apply topological arguments, in order that multiplication be a continuous function, we assume
 that $\tT$ is dense, in the sense that $W _{\a, \beta; \tT}\neq \emptyset$ whenever $\a \nul
 \beta$.

\subsection{The semiring of functions}

Let $\FunR$ (resp.~$\CFunR$) denote the semiring of functions
(resp. continuous functions) from $R^{(n)}$ to~ $R^{(n)}$;
~cf.~\cite[Definition 3.31]{IzhakianRowen2007SuperTropical}. We
can also define our partial orders on $\FunR$:

\begin{defn} For $f,g \in \FunR,$ we write
 $f \nuge g$ if $f(\bfa)\nuge g(\bfa)$  for all $\bfa = (a_1, \dots, a_n)$ in~ $R^{(n)}.$

 The \textbf{ghost-surpassing identity} $f \lmodg g$
holds  for $f,g \in
\FunR$, if $f(a_1, \dots, a_n) \lmodg g(a_1, \dots, a_n)$ for
every $a_1, \dots, a_n \in R.$
  \end{defn}

\begin{prop}\label{dens}  Suppose $f,g \in \CFunR$.
\begin{enumerate} \eroman
    \item  If  $f(\bfa) \nuge g(\bfa)$ for all $\bfa$ in a dense
    subset
of $R^{(n)}$, then $f \nuge g$. \pSkip

    \item If  $f(\bfa) \lmodg  g(\bfa)$ for all $\bfa$ in a dense
subset $\mathcal S$ of $R^{(n)}$, then $f \lmodg g$.
\end{enumerate}

\end{prop}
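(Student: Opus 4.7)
The plan is to argue both parts by contradiction in the $\nu$-topology on $R^{(n)}$, reducing to coordinatewise questions about the continuous scalar functions $f_i, g_i$ and exploiting the fact that $\tT$ is open (being the union of the basic sets $W_{\alpha,\beta;\tT}$) while $\tGz$, and hence $\tGz^{(n)}$, is closed.

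For part (i), suppose toward a contradiction that $f(\bfa_0) \not\nuge g(\bfa_0)$ for some $\bfa_0 \in R^{(n)}$; then some component satisfies $f_i(\bfa_0)^\nu < g_i(\bfa_0)^\nu$. By the density hypothesis on $\tT$, the ordered monoid $\tG$ is densely ordered, so one can pick $\alpha \in \tG$ strictly between these two values. Continuity of $f_i$ and $g_i$ then makes $V = \{\bfa : f_i(\bfa)^\nu < \alpha < g_i(\bfa)^\nu\}$ an open neighborhood of $\bfa_0$, which must meet the dense subset on which $f \nuge g$ holds, giving the desired contradiction.

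For part (ii), I first apply (i) (since $\lmodg$ refines $\nuge$) to obtain $f \nuge g$ everywhere. Next, since $b \lmodg a$ in a supertropical semiring forces $a + b \in \tGz$ (by a quick case check on whether the summands are $\nu$-matched or not), the continuous function $f + g$ sends the dense subset $\mathcal S$ into the closed set $\tGz^{(n)}$, and therefore does so globally. Now fix $\bfa_0$ and component $i$. If $f_i(\bfa_0) \nug g_i(\bfa_0)$ strictly, then $f_i(\bfa_0) + g_i(\bfa_0) = f_i(\bfa_0) \in \tGz$, giving $f_i(\bfa_0) \lmodg g_i(\bfa_0)$. If $f_i(\bfa_0) \nucong g_i(\bfa_0)$ and $f_i(\bfa_0) \in \tGz$, we are done automatically; the remaining subcase is $f_i(\bfa_0) \in \tT$, which I handle by using openness of $\tT$ together with continuity of $f_i$ to pass to an open neighborhood $U$ of $\bfa_0$ on which $f_i$ is tangible, applying Remark~\ref{antitan} to conclude $f_i = g_i$ on the dense subset $U \cap \mathcal S$, and then using continuity of $g_i$ along a net in $U \cap \mathcal S$ converging to $\bfa_0$ to force $g_i(\bfa_0) = f_i(\bfa_0)$.

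I expect the main obstacle to be the matched subcase of (ii) with $f_i(\bfa_0)$ tangible, precisely because the $\nu$-topology is not Hausdorff in general; the interplay of the openness of $\tT$ with Remark~\ref{antitan} is essential to pin down the value of $g_i$ at $\bfa_0$ rather than merely its $\nu$-value.
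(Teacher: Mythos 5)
Your part~(i) is exactly the paper's argument: negate, use continuity to push the strict inequality $f \nul g$ to an open neighborhood $W_\bfa$ of the offending point, and contradict density. Your opening of part~(ii) — reducing to $f \nuge g$ everywhere via~(i), then using the fact that $b \lmodg a$ forces $a+b \in \tGz$ together with closedness of $\tGz$ to get $f+g \in \tGz$ globally — is a sound repackaging of the paper's version, which in the strict-domination case shows directly that $f$ itself lands in $\tGz$ on the dense set near $\bfa$ and hence on all of $W_\bfa$. Either route disposes of the case $f(\bfa_0) \nug g(\bfa_0)$ (forcing $f(\bfa_0)$ ghost) and, trivially, of the $\nu$-matched case with $f(\bfa_0)$ already ghost.

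The gap is exactly where you flagged it: the subcase $f(\bfa_0) \in \tT$ with $f(\bfa_0) \nucong g(\bfa_0)$ but $f(\bfa_0) \neq g(\bfa_0)$. After correctly concluding $f = g$ on $U \cap \mathcal S$ via Remark~\ref{antitan}, the net argument does not close it: the $\nu$-topology is not Hausdorff (any two $\nu$-matched points are inseparable by the basic opens $W_{\alpha,\beta}$), so a net in $U \cap \mathcal S$ converging to $\bfa_0$ has several limits, and continuity of $g$ yields only that $g(\bfa_0)$ is \emph{a} limit of the values $g(\bfa') = f(\bfa')$ — both $f(\bfa_0)$ and $f(\bfa_0)^\nu$ qualify — not that $g(\bfa_0) = f(\bfa_0)$. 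You should know, however, that the paper's own proof of~(ii) also treats only the strict case $f(\bfa) \in \tT$, $f(\bfa) \nug g(\bfa)$, and is silent on the $\nu$-matched subcase; so you have not missed an argument the paper supplies. Indeed, ``$\nu$-matched but unequal'' is precisely what the $\nu$-topology cannot detect, so this subcase is out of reach for a pure continuity-and-density argument (for instance, replacing the value of $f$ at a single tangible point by its ghost produces a continuous $g$ when $\nu|_\tT$ is injective).
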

\begin{proof}  $ $
\begin{enumerate}\eroman
    \item  Otherwise, we have $f(\bfa) \nul g(\bfa)$
for some $\bfa \in R^{(n)}$, so this inequality holds for some
open interval $W_\bfa$ containing $\bfa.$ \pSkip

\item  We are done by (i) unless there exists $\bfa$ such that
$f(\bfa) \in \tT$ and $f(\bfa) \nug   g(\bfa)$. But then this
inequality holds for some open interval $W_\bfa$ containing
$\bfa,$  implying $f( \bfa ') \in \tGz$ for all $\bfa' \in \tS
\cap W_\bfa$. We conclude that $f(W_\bfa )\subseteq \tG,$ contrary
to $\bfa \in W_\bfa.$
\end{enumerate}
\end{proof}

Several examples of ghost surpassing identities given in
\cite{IzhakianRowen2008Matrices}; as we shall see, many of these
can be obtained via a powerful new technique of \cite{AGG}.

\subsection{Identities of  semirings with symmetry}\label{symm}

Any commutative semiring with ghosts is a semiring with symmetry
in the sense of \cite[Definition 4.1]{AGG}, where their map $\tau$
is taken to be the identity map, and their $S^o$ is the ghost
ideal $\tGz$. Furthermore, they define a  relation $a \succeq^o b$
when $a = b + c$ for some $c \in S^o$;  this clearly specializes
to our relation $a \lmodg b.$

Akian, Gaubert, and Guterman \cite[Theorem~4.21]{AGG} then proved
their \textbf{strong transfer principle}, which we rephrase slightly:

\begin{thm}\label{STP} Suppose $ p^+ ,p^-, q^+ , q^- \in \mathbb N[\xi_1, \dots, \xi_m]$ are
polynomials in commuting indeterminates $\xi_1, \dots, \xi_m$, and
let $p =  p^+ - p^-$ and $q =  q^+ - q^-$ in the free commutative
ring $\mathbb Z[\xi_1, \dots, \xi_m]$. If $p = q$, and if no
monomials appear in both $q^+$ and $q^-$, then $p ^+ + p^-\succeq^o q^+ + q^-$ is
an identity for all commutative semirings.
\end{thm}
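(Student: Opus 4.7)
The plan is to translate the equation $p = q$ in $\Z[\xi_1, \dots, \xi_m]$ into coefficient-level identities in $\N[\xi_1, \dots, \xi_m]$, and then to exhibit $p^+ + p^- - (q^+ + q^-)$ as \emph{twice} a polynomial with natural-number coefficients --- which evaluates to a ghost in any commutative semiring with ghosts, delivering the relation $\succeq^o$.

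First I would expand $p^\pm = \sum_\alpha a_\alpha^\pm \xi^\alpha$ and $q^\pm = \sum_\alpha b_\alpha^\pm \xi^\alpha$ with $a_\alpha^\pm, b_\alpha^\pm \in \N$, and rewrite the hypothesis $p = q$ as the family of $\N$-identities
$$ a_\alpha^+ + b_\alpha^- \; = \; a_\alpha^- + b_\alpha^+ \qquad \text{for every multi-index } \alpha. $$
The hypothesis that no monomial occurs in both $q^+$ and $q^-$ forces $\min(b_\alpha^+, b_\alpha^-) = 0$ for each $\alpha$. A short, symmetric case analysis on whether $b_\alpha^- = 0$ or $b_\alpha^+ = 0$ then pins the discrepancy $a_\alpha^+ + a_\alpha^- - (b_\alpha^+ + b_\alpha^-)$ to $2\min(a_\alpha^+, a_\alpha^-)$, and summing over $\alpha$ yields the polynomial identity
$$ p^+ + p^- \; = \; q^+ + q^- + 2 r $$
for some $r \in \N[\xi_1, \dots, \xi_m]$. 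Since this is already an identity over $\N$, it persists verbatim after evaluation in any commutative semiring.

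To finish I would invoke the defining properties of a semiring with ghosts $\RGnu$: for every $x \in R$ one has $2x = x + x = \nu(x) \in \tGz$, and $\tGz$ is a semiring ideal, so evaluated at any tuple in $R^{(m)}$ the term $2r$ lands in $\tGz$. That is precisely the statement $p^+ + p^- \succeq^o q^+ + q^-$. I expect the only genuine obstacle to be the middle case analysis that extracts the factor of $2$; the disjoint-monomial hypothesis on $q$ is exactly what prevents a scenario in which $b_\alpha^+$ and $b_\alpha^-$ are both positive and the discrepancy becomes odd, in which case no ghost correction would be available.
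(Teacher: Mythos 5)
Your proof is correct, but there is an important caveat about what ``the paper's own proof'' means here: the paper does not prove this statement at all. Theorem~\ref{STP} is quoted (and slightly rephrased) from Akian, Gaubert, and Guterman \cite[Theorem 4.21]{AGG}, and the present paper uses it as a black box to derive the matrix metatheorem (Theorem~\ref{STP1}). So you are supplying a proof where the authors gave a citation.

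That said, your argument is a valid and essentially self-contained proof in the context the paper needs, namely commutative semirings with ghosts (the case $\tau = \mathrm{id}$ in the AGG semiring-with-symmetry framework, as the paper explains in \S\ref{symm}). The coefficient-level reduction is exactly right: writing $p^\pm = \sum_\alpha a_\alpha^\pm \xi^\alpha$, $q^\pm = \sum_\alpha b_\alpha^\pm \xi^\alpha$, the hypothesis $p=q$ gives $a_\alpha^+ + b_\alpha^- = a_\alpha^- + b_\alpha^+$ over $\N$, and the disjointness hypothesis forces $\min(b_\alpha^+, b_\alpha^-)=0$. In each of the two resulting cases ($b_\alpha^-=0$ or $b_\alpha^+=0$) one checks directly that
$$a_\alpha^+ + a_\alpha^- \;=\; b_\alpha^+ + b_\alpha^- + 2\min(a_\alpha^+, a_\alpha^-),$$
so that $p^+ + p^- = q^+ + q^- + 2r$ holds identically over $\N$, with $r = \sum_\alpha \min(a_\alpha^+, a_\alpha^-)\,\xi^\alpha \in \N[\xi_1,\dots,\xi_m]$. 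Since in a semiring with ghosts $2x = x + x = \nu(x) \in \tGz$ for every $x$, and $\tGz$ is an ideal, evaluating $2r$ always lands in the ghost ideal, which is exactly $p^+ + p^- \succeq^o q^+ + q^-$. The identification of the disjointness hypothesis as the mechanism that makes the discrepancy even (so a ghost correction is available) is the right intuition; this is precisely why AGG call $q$ admissible. The one caveat worth flagging: in AGG's more general setting of semirings with symmetry (arbitrary $\tau$, e.g.\ rings with $\tau(a)=-a$ and $S^o=\{0\}$), one cannot use $2x \in S^o$, and the transfer principle would be stated for $p^+ + \tau(p^-)$ rather than $p^+ + p^-$; your decomposition would then need to be recast as $r + \tau(r)$ rather than $2r$. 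But for the ghost case, which is all that is used in this paper, your proof is complete.
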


\section{Matrices and adjoints}

In this section, we accumulate basic information about matrices
and their adjoints. We write $M_n(R)$ for the semiring of $n
\times n$ matrices, whose multiplicative identity is denoted as
$I$, and we define the supertropical \textbf{determinant} $|A|$ of
$A = (a_{i,j})$ to be the permanent as in
\cite{zur05TropicalAlgebra,zur05TropicalRank,IzhakianRowen2008Matrices,
IzhakianRowen2009TropicalRank}; i.e.,
$$ |A | = \sum_{\sig \in S_n} a_{1, \sig(1)} \cdots a_{n, \sig(n)}.$$
 A permutation $\sig \in S_n$
 \textbf{attains} $\Det{A}$ if $\Det{A} \nucong  a_{\sig (1),1} \cdots a_{\sig
(n),n},$ where $A = (a_{i,j})$.

A matrix $A$ is defined to be \textbf{nonsingular} if $\Det{A}\in
\tT$ is invertible; $A$ is defined to be \textbf{singular} if
$\Det{A}\in \tGz$. Thus, over a supertropical semifield, every
matrix is either singular or nonsingular.

\begin{defn} The \textbf{minor} $A_{i,j}$ is obtained by
deleting the $i$ row and $j$ column of $A$. The \textbf{adjoint}
matrix $\adj{A}$ of a matrix $A =(a_{i,j})$ is defined as the
transpose of the matrix $(a'_{i,j}),$ where $a'_{i,j}=
\Det{A_{i,j}}$. The \textbf{tangible adjoint} matrix $\Tadj{A}$
of  $A$ is defined as the transpose of the matrix
$(\Inu{a'_{i,j}}).$

\end{defn}

Note that $\Tadj{A}$ depends on the choice of the tangible retract
function $\hnu$.

Viewing matrices as $n^2$-dimensional vectors, we can introduce
the product topology, as well as our relations $\nuge$ and
$\lmodg$, to matrices (by comparing the corresponding entries).

\begin{lem} The function $\det : A \mapsto \Det{A}$ is a continuous function from $ R^{(n^2)}$ to
$R$, and the adjoint is a continuous function from $ R^{(n^2)}$ to
$ R^{(n^2)}$.\end{lem}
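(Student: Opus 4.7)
The plan is to reduce both statements to the continuity of the two semiring operations $+$ and $\cdot$ on $R$ in the $\nu$-topology, since both $\det$ and each entry of $\adj{A}$ are polynomial expressions (in $n!$ terms and $(n-1)!$ terms respectively) in the matrix entries, which themselves are continuous projections $R^{(n^2)} \to R$. Once addition and multiplication are continuous, any finite composition of projections, additions, and multiplications is continuous, and everything follows by the usual componentwise argument for the topology on $R^{(n^2)}$.

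First I would verify that multiplication $R \times R \to R$ and addition $R \times R \to R$ are continuous in the $\nu$-topology of \defnref{ordertop}. For multiplication, this is the density hypothesis already invoked in the text (which guarantees that the preimage of a basic open interval $W_{\alpha,\beta}$ under $(a,b) \mapsto ab$ is open, using $(ab)^\nu = a^\nu b^\nu$ together with the fact that $\tG$ is an ordered monoid). For addition, the key observation is that the basis $\{W_{\alpha,\beta}\}$ is defined purely in terms of $\nu$-values, and $\nu(a+b) = \max(\nu(a),\nu(b))$ in the ordered monoid $\tGz$; continuity therefore reduces to continuity of the $\max$ function on the ordered set $\tGz$, which is immediate from the definition of the order topology.

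Next I would observe that the $n^2$ coordinate projections $R^{(n^2)} \to R$ are continuous by definition of the product topology. Writing
$$|A| = \sum_{\sigma \in S_n} a_{1,\sigma(1)} a_{2,\sigma(2)} \cdots a_{n,\sigma(n)},$$
the determinant is a sum of $n!$ products of projections, hence continuous by the previous paragraph. Each entry of $\adj{A}$ is of the form $|A_{i,j}|$, the tropical determinant of an $(n-1) \times (n-1)$ submatrix of $A$, and so is a composition of a coordinate inclusion $R^{(n^2)} \to R^{((n-1)^2)}$ (continuous, as it just discards some coordinates) with $\det$, and is therefore continuous. Continuity of $\adj : R^{(n^2)} \to R^{(n^2)}$ then follows by the universal property of the product topology on the target.

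There is no real obstacle here; the only point requiring attention is that the $\nu$-topology sees only $\nu$-values and not the tangible/ghost distinction, so one should not expect the tangible retract $\hnu$ (and hence $\Tadj{A}$) to be continuous, which explains why the statement is made only for $\adj{A}$ and not for $\Tadj{A}$.
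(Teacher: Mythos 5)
Your proof is correct and follows exactly the same route as the paper, which simply observes that the determinant and adjoint are built from the continuous operations of addition and multiplication. The paper treats this as "clear" in one sentence; you have merely supplied the routine verifications (continuity of $+$ and $\cdot$, projections, compositions) that the paper leaves implicit.
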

\begin{proof}  Clear, because the determinant is defined in terms of addition and multiplication, which are
continuous functions in the $\nu$-topology over $R^{(n^2)}$.
\end{proof}

\begin{rem}\label{ord0} We can reformulate \cite[Theorem 3.5]{IzhakianRowen2008Matrices} as
$$\Det{AB} \lmodg \Det{A}\Det{B},$$ for any $A,B \in M_n(R)$,  and \cite[Proposition
4.8]{IzhakianRowen2008Matrices} as $\adj{AB} \lmodg \adj{B}\adj{A}
.$\end{rem}

\subsection{Ghost-surpassing identities of matrices}\label{matid}

Suppose   $P^+ =(p_{i,j}^+), P^- =(p_{i,j}^-), Q^+= (q _{i,j}^+)$,
and $ Q^- = (q_{i,j}^-)$ are matrix expressions whose respective
$(i,j)$   entries
 $p_{i,j}^+,p_{i,j}^-, q_{i,j}^+,$ and $q_{i,j}^- \in \mathbb N[\xi_1, \dots, \xi_m]$ are semiring  polynomials in the
entries of $x_1, \dots, x_\ell$ (in other words, only involving
addition and multiplication, but not negation). In particular,
when $x_i$ are $n \times n$ matrices, we set $m  = \ell n$.

Formally set $P(x_1, \dots, x_\ell)= P^+ - P^-$ and $ Q(x_1,
\dots, x_\ell)= Q^+ - Q^-$. Let us spell out  how
Theorem~\ref{STP} works for matrices.
 We say $Q$ is \textbf{admissible} if
the monomials of $q_{i,j}^+$ and $q_{i,j}^-$ are distinct, for each pair $(i,j)$.

Theorem~\ref{STP} provides the following metatheorem:

\begin{thm}\label{STP1} Suppose $P=Q$ is a matrix identity of $M_n(\mathbb Z)$, with $Q$ admissible.
(In other words, $P(A_1, \dots, A_\ell) = Q(A_1, \dots, A_\ell) $
for all matrices $A_1, \dots, A_\ell.$) Then for any commutative
semiring with ghosts $(R, \tGz, \nu)$, the matrix semiring with
ghosts $M_n(R)$ satisfies the ghost-surpassing matrix identity $P ^+ + P^-\lmodg Q^+ + Q^-
.$
\end{thm}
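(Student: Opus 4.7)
The plan is to deduce the matrix-level ghost-surpassing identity from Theorem~\ref{STP} applied one entry at a time. The payoff of this reduction rests on a single observation: although the matrices $x_1, \ldots, x_\ell$ themselves do not commute, their individual entries (elements of the commutative semiring $R$) \emph{do} commute, so once we pass to the $(i,j)$-entry of each matrix expression we land exactly in the setup of Theorem~\ref{STP}.

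First I would unpack the hypothesis. By construction, the $(i,j)$-entries $p_{i,j}^+, p_{i,j}^-, q_{i,j}^+, q_{i,j}^-$ lie in $\N[\xi_1, \ldots, \xi_m]$. The assumption that $P(A_1, \ldots, A_\ell) = Q(A_1, \ldots, A_\ell)$ for every choice of integer matrices $A_1, \ldots, A_\ell$ translates, entry by entry, to the polynomial $p_{i,j}^+ - p_{i,j}^- - (q_{i,j}^+ - q_{i,j}^-) \in \Z[\xi_1, \ldots, \xi_m]$ vanishing on all of $\Z^m$; since $\Z$ is infinite, this forces $p_{i,j}^+ - p_{i,j}^- = q_{i,j}^+ - q_{i,j}^-$ as actual polynomials in the free commutative ring, for each pair $(i,j)$.

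Next I would invoke Theorem~\ref{STP}. Admissibility of $Q$, as defined in the preceding paragraph, means precisely that $q_{i,j}^+$ and $q_{i,j}^-$ share no common monomials, which is the remaining hypothesis of the strong transfer principle. Applying Theorem~\ref{STP} to each fixed pair $(i,j)$ yields the scalar ghost-surpassing identity $p_{i,j}^+ + p_{i,j}^- \lmodg q_{i,j}^+ + q_{i,j}^-$ in the commutative semiring with ghosts $R$. Since $\lmodg$ on $M_n(R)$ is defined entrywise (matrices being viewed as $n^2$-dimensional vectors, as noted just before the theorem), these $n^2$ scalar identities assemble into the single matrix identity $P^+ + P^- \lmodg Q^+ + Q^-$, as desired.

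The step I expect to be least mechanical is the first one: making explicit that the universally quantified matrix identity over $\Z$ is equivalent to entrywise polynomial identities in commuting variables, so that the hypotheses of Theorem~\ref{STP} genuinely apply. The remainder of the argument is then a straightforward packaging of the strong transfer principle.
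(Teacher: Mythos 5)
Your proposal is correct and takes essentially the same approach as the paper: both reduce to checking the $n^2$ entrywise ghost-surpassing identities and invoking Theorem~\ref{STP} in the commuting indeterminates (the paper phrases the first step via "generic matrices" rather than via the observation that a polynomial identity over the infinite domain $\Z$ forces equality in $\Z[\xi_1,\ldots,\xi_m]$, but these are the same reduction).
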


The proof is standard: It is enough to check for substitutions to
``generic matrices'' in which each indeterminate $x_k$ is
specialized to a matrix $(\xi_{i,j}^{k})$ whose entries are
commuting indeterminates. Then the proposed ghost-surpassing
identity $P ^+ + P^-\lmodg Q^+ + Q^-
$ can be expressed in terms of $n^2$ ghost-surpassing identities
in the commuting indeterminates $\xi_{i,j}^k,$ one for each matrix
entry.

\begin{rem}\label{background} Define the \textbf{characteristic
polynomial} $f_A$ of $A$ as $|A + \lm I|$. If    $ f_A =
\sum_{i=0}^n  \a_i \la^i, $ define  the \textbf{tangible
characteristic polynomial} $\hfA$ of $A$ as $ \sum_{i=0}^n
\Inu{\a_i} \la^i$. Here are some results from
\cite{IzhakianRowen2008Matrices}, which are reproved as easy
applications of Theorem~\ref{STP1}, for any semiring with ghosts
$(R, \tGz, \nu)$:

\begin{enumerate}\eroman
\item $\Det{AB} \lmodg \Det{A}\Det{B};$
 \item Any matrix $A$   satisfies its tangible characteristic polynomial   $\hfA  $;
 i.e.,
 $\hfA(A) \lmodg \zero;$
\item Notation as above, let $\tfA = \sum_{i=1}^n \a_i
\la^{i-1};$ then $\tfA(A) \lmodg \adj{A}.$
\end{enumerate}
In order to apply Theorem~\ref{STP1}, one needs to observe
that in each of these expressions  the  $q_{i,j}^+$ and $q_{i,j}^-$
are distinct. This is true in (i) and (iii) because of the standard formulas for the determinant and adjoint.
(We can describe   the left side  of  (iii) by applying Newton's formula for computing the coefficients of the  characteristic polynomial of $A$ in terms of traces of powers of $A$. )

Now (ii) is  obtained by multiplying (iii) by $A$, noting that $A
\adj{A}\lmodg I$ by \cite[Remark~4.14]{IzhakianRowen2008Matrices}.

\end{rem}

\subsection{Quasi-identity matrices and quasi-inverses}

Recall the following definition from
\cite{IzhakianRowen2008Matrices}:
\begin{defn}
 A \textbf{\quasi-identity matrix} is a \regular ,
multiplicatively idempotent matrix equal to the identity matrix
~$\um$ on the diagonal, and whose off-diagonal entries are in
$\tGz$.\end{defn}


\begin{rem}\label{eq:idorder} Any \quasi-identity matrix $\um'$  ghost surpasses $
\um;$ i.e., $I' \lmodg I$.
\end{rem}

Quasi-identities seem to be the key to the supertropical matrix
theory. Note however that the sum of quasi-identities is not necessarily a
quasi-identity. For example, take
 $$B_1 =  \left(\begin{matrix} 0 & 10^\nu \\
-\infty  & 0
\end{matrix}\right) \qquad \text{and} \quad B_2 =  \left(\begin{matrix} 0 & -\infty \\
10 ^\nu & 0
\end{matrix}\right).$$ Then $ B_1$ and $  B_2$ are
quasi-identities, but $$   B_1+ B_2 =  \left(\begin{matrix} 0 & 10^\nu \\
10^\nu & 0
\end{matrix}\right)$$ is a singular matrix Accordingly, we start with a given
 matrix $A$. Most of the following theorem is
contained in \cite[Theorem 4.12]{IzhakianRowen2008Matrices}.
\begin{thm}\label{rmk:adj}
 Suppose $A = (a_{i,j}),$ with $\Det{A}$ invertible. Define
 $$A^\nb = \frac \rone {\Det{A}} \adj{A} \quad  \text{and} \quad  A^{\Tnb} = \frac \rone {\Det{A}}
 \Tadj{A};$$
$$ \um_A = A A^\nb   = A A^{\Tnb}; \qquad \um'_A = A^\nb  A = A^{\Tnb}
A.
$$
   Then $A A^\nb   = A A^{\Tnb} = \um_A$ and
$A^\nb  A = A^{\Tnb}  A =\um'_A$ are quasi-identities.\end{thm}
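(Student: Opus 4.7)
The equalities $AA^\nb = \um_A$ and $A^\nb A = \um'_A$, together with the quasi-identity property, are contained in Theorem~4.12 of \cite{IzhakianRowen2008Matrices}; the new content is the assertion $A A^{\Tnb} = A A^\nb$ and, symmetrically, $A^{\Tnb} A = A^\nb A$. My plan is to compare the $(i,k)$ entries of $A \Tadj{A}$ and $A \adj{A}$ directly, namely $\sum_j a_{i,j} \Inu{\Det{A_{k,j}}}$ versus $\sum_j a_{i,j} \Det{A_{k,j}}$, using that $\hnu$ preserves $\nu$-values and is the identity on tangibles.

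On the diagonal ($i=k$), the Laplace expansion $\sum_j a_{i,j} \Det{A_{i,j}} = \Det{A}$ is tangible by hypothesis. Consequently, in the full expansion $\sum_{\tilde\sigma} \prod_l a_{l,\tilde\sigma(l)}$ over permutations of $\{1,\dots,n\}$, a unique permutation $\tilde\sigma_0$ strictly dominates in $\nu$-value; the corresponding column $j_0 = \tilde\sigma_0(i)$ is then the unique $j$-index achieving $(\Det{A})^\nu$, and $\Det{A_{i,j_0}}$ itself is tangible. Since $\hnu$ fixes tangibles, $\Inu{\Det{A_{i,j_0}}} = \Det{A_{i,j_0}}$, while all other $a_{i,j}\Inu{\Det{A_{i,j}}}$ are strictly smaller in $\nu$-value. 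Therefore $\sum_j a_{i,j} \Inu{\Det{A_{i,j}}} = \Det{A}$, matching the diagonal entry of $A\adj{A}$.

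Off the diagonal ($i\neq k$), the expansion $\sum_j a_{i,j} \Det{A_{k,j}} = \sum_{\tilde\sigma} a_{i,\tilde\sigma(k)} \prod_{l\neq k} a_{l,\tilde\sigma(l)}$ is the permanent of the matrix obtained from $A$ by replacing row $k$ by row $i$. For each permutation $\tilde\sigma$, swapping the values at positions $i$ and $k$ produces $\tilde\sigma'$ with an identical product; because $\tilde\sigma(i)\neq\tilde\sigma(k)$, the pair contributes two distinct $j$-indices, $\tilde\sigma(k)$ and $\tilde\sigma(i)$, to the outer sum. Hence the maximum $\nu$-value of the outer $j$-sum is attained by at least two distinct indices $j_1,j_2$. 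In the retract version $\sum_j a_{i,j} \Inu{\Det{A_{k,j}}}$, the terms $a_{i,j_1} \Inu{\Det{A_{k,j_1}}}$ and $a_{i,j_2} \Inu{\Det{A_{k,j_2}}}$ remain $\nu$-matched of maximum $\nu$-value, so their sum is ghost irrespective of whether each individual term is tangible or ghost; lower-$\nu$-value terms leave this unaffected. Thus the retract sum is a ghost of the same $\nu$-value as $\sum_j a_{i,j}\Det{A_{k,j}}$, and since ghost elements are determined by their $\nu$-value, the two $(i,k)$ entries coincide.

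Multiplying by $\Det{A}^{-1}$ yields $AA^{\Tnb}=AA^\nb=\um_A$; the analogous column-based argument gives $A^{\Tnb}A = A^\nb A = \um'_A$, and these are quasi-identities by the earlier Theorem~4.12. The main subtlety is the off-diagonal step: one must observe that the swap-pairing produces two distinct column indices in the outer sum, so the $\nu$-maximum is attained at least twice, forcing the retract sum to persist as a ghost.
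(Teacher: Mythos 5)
Your proof is correct, and it takes a more explicit, computational route than the paper's. The paper reduces the new content ($AA^\nb = AA^\Tnb$) to two observations: since $\adj{A}\lmodg\Tadj{A}$ entrywise, we have $A\adj{A}\lmodg A\Tadj{A}$; the diagonal entries of $A\adj{A}$ are tangible (equal to $\Det{A}$), so by the antisymmetry of $\lmodg$ on tangibles the diagonals agree (this is the content hidden behind the phrase ``a fortiori''); and the off-diagonal entries of $A\Tadj{A}$ are ghost by citing Remark~4.5 of \cite{IzhakianRowen2008Matrices}, whence ghost entries with matching $\nu$-values coincide. You instead compute both sides directly from the permutation expansion: on the diagonal you isolate the unique permutation $\tilde\sigma_0$ achieving $\Det{A}$ (unique because $\Det{A}$ is tangible), observe it forces $\Det{A_{i,j_0}}$ to be tangible and strictly dominant in the $j$-sum, and conclude the retract changes nothing; off the diagonal you essentially re-prove the relevant part of Remark~4.5 via the transposition $\sigma\mapsto\sigma\circ(i\,k)$, showing the maximum is hit at two distinct column indices so the retract sum remains ghost. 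Both arguments are sound. What the paper's route buys is brevity by leaning on already-proved lemmas; what yours buys is a self-contained verification, and in particular it makes precise why the tangible diagonal of $AA^\Tnb$ actually equals $\rone$ rather than merely being some tangible element of the right $\nu$-value --- a step the paper leaves implicit in the word ``a fortiori.''
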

\begin{proof} Starting with \cite[Theorem
4.12]{IzhakianRowen2008Matrices}, it remains to show that $A A^\nb
= A A^{\Tnb}$. Their $\nu$-values are the same, so we need only
check that the diagonal entries of $A A^{\Tnb}$ are tangible
(which is a fortiori, since this is true for the diagonal entries
of $A A^\nb$), and  that the off-diagonal entries of  $A A^{\Tnb}$
are ghost, which holds because of \cite[Remark
4.5]{IzhakianRowen2008Matrices}.\end{proof}

The fact that $I_A^2 = I_A,$ proved in \cite[Theorem
4.12]{IzhakianRowen2008Matrices} by means of Hall's Marriage
Theorem, is a key ingredient of the theory.

Inspired by Theorem \ref{rmk:adj}, when $\Det{A}$ is invertible,
we say that $A$ is
\textbf{\quasi-invertible} and call $A^\nb$ the \textbf{canonical two-sided \quasi-inverse} of
$A$ and  define the \textbf{ right  \quasi-identity matrix of} $A$
to be the matrix
$$I_A = A A^\nb   = A A^{\Tnb},$$ and the \textbf{left
\quasi-identity matrix of} $A$ to be the matrix $$I'_A =  A^\nb A
= A^{\Tnb} A.$$

(The tangible \quasi-inverse $A^{\Tnb}$ is introduced here since it plays a role in
solving equations, in \S\ref{solveq}.) Over a supertropical semifield, a matrix is
\quasi-invertible iff it is nonsingular.

\begin{rem}\label{lem:porder} If $ C \lmodg A,$ then  $BC \lmodg
BA.$ In particular, $B I' \lmodg  B$ for any quasi-identity matrix
$I'$; cf. ~Remark~\ref{eq:idorder}. By symmetry, we also have $I'
B \lmodg  B$.
\end{rem}

\begin{rem}\label{match1} If $A \nucong B$ are \quasi-invertible, then $I_A = I_B.$ (Indeed, the
diagonal of each is  the identity matrix $I$, and the off-diagonal
entries are clearly $\nu$-matched and thus, being ghosts, are
equal.)\end{rem}

\begin{example}\label{twobytwo} For $A  =  \left(\begin{matrix} a & b \\
c & d
\end{matrix}\right),$ we have $ \adj{A} = \left(\begin{matrix} d & b \\
c & a
\end{matrix}\right);$ hence
 $$ A\adj{A} = \left(\begin{matrix} |A| & (ab)^\nu \\  (cd)^\nu &
 |A|\end{matrix}\right) \quad \text{ whereas } \quad \adj{A}A = \left(\begin{matrix} |A| & (bd)^\nu \\  (ac)^\nu &
 |A|\end{matrix}\right).$$
Thus, the left and right quasi-identities of a quasi-invertible
matrix can be quite different. This enigma will only be resolved
in Corollary~\ref{switch} below.
 \end{example}

\begin{lem}\label{ord1} The  \quasi-invertible matrices are dense in $M_n(R)$.\end{lem}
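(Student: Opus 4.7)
The goal is to show that any basic neighborhood of a given $A = (a_{i,j}) \in M_n(R)$ in the $\nu$-topology contains a quasi-invertible matrix. Over a supertropical semifield, quasi-invertibility of $B$ amounts to $|B|$ being tangible (hence invertible). So the plan is to produce $B \in U$ with tangible determinant.

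First shrink $U$ to a basic open set $\prod_{i,j} W_{\alpha_{i,j},\beta_{i,j}}$ with $\alpha_{i,j} \nul a_{i,j}^\nu \nul \beta_{i,j}$; the task becomes that of choosing, independently for each $(i,j)$, an element $b_{i,j}$ with $\alpha_{i,j} \nul b_{i,j}^\nu \nul \beta_{i,j}$. By the density assumption on $\tT$, each open interval $W_{\alpha_{i,j},\beta_{i,j};\tT}$ is nonempty; since $\tG$ is in turn densely ordered (as any subinterval $W_{\alpha',\beta';\tT}$ is also nonempty whenever $\alpha' \nul \beta'$), each interval contains tangibles of infinitely many distinct $\nu$-values. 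Choosing each $b_{i,j}$ tangible yields a matrix $B = (b_{i,j}) \in U$ with tangible entries, and consequently every product $\prod_i b_{i,\sigma(i)}$ ($\sigma \in S_n$) is tangible.

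The determinant $|B| = \sum_{\sigma \in S_n} \prod_i b_{i,\sigma(i)}$ is a sum of $n!$ tangibles, and is itself tangible iff a \emph{unique} permutation $\sigma_0$ attains the maximal $\nu$-value. The remaining task is thus to make the tangible choices $b_{i,j}$ generically, avoiding, for each of the $\binom{n!}{2}$ pairs $\sigma \ne \tau$, the tie condition
\[
\prod_i b_{i,\sigma(i)}^\nu \; = \; \prod_i b_{i,\tau(i)}^\nu
\]
in the ordered abelian group $\tG$. Each such tie condition is a single nontrivial relation among the $b_{i,j}^\nu$'s (nontrivial because $\sigma$ and $\tau$ differ in at least two positions). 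Pick the $b_{i,j}^\nu$ one at a time within $(\alpha_{i,j},\beta_{i,j})$; at each step, the outstanding tie relations, insofar as they nontrivially constrain the current entry, rule out at most one value apiece, whereas the interval contains infinitely many tangibles. Iterating, we obtain $B \in U$ whose permanent is dominated by a unique tangible product, hence $|B|$ is tangible and invertible, so $B$ is quasi-invertible.

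The main point requiring care is the inductive genericity argument: one must order the choices so that each remaining tie condition, when it comes time to eliminate it, still involves an entry not yet fixed --- this is automatic because the involved entries $\{(i,\sigma(i))\} \triangle \{(i,\tau(i))\}$ are nonempty. The density hypothesis on $\tT$ then ensures there is room to avoid the finitely many forbidden values at each stage.
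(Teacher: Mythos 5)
Your proof is correct in spirit but takes a genuinely different, and considerably heavier, route than the paper's. The paper's argument is a one-line perturbation: fix a permutation $\sigma$ attaining $|A|$, choose a tangible $\alpha$ with $\alpha^\nu$ just above $\rone^\nu$, and replace each of the $n$ entries $a_{\sigma(i),i}$ by $\alpha\,\inua{a_{\sigma(i),i}}$. Then the product along $\sigma$ acquires a factor $\alpha^n$, while any other permutation $\tau \ne \sigma$ shares at most $n-2$ of the replaced positions with $\sigma$, so its product picks up at most $\alpha^{n-2}$; hence the $\sigma$-term strictly dominates and $|A'| = \alpha^n\,\inua{|A|} \in \tT$. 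No genericity, no counting of ties, and only a single choice of $\alpha$ is needed.

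By contrast, you perturb all $n^2$ entries and then run a general-position argument to break all $\binom{n!}{2}$ possible ties. This works, but it costs more: it requires that each nonempty open interval in $\tT$ carry infinitely many distinct $\nu$-values (you derive this from the density hypothesis by nesting intervals — fine), it uses the cancellativity of $\tG$ implicitly (so that a tie relation, once all but one of its entries in the symmetric difference are fixed, excludes at most one value of the last entry — you should say this), and it needs the bookkeeping that for each tie relation you only ``fire'' it at the step when the last entry of its symmetric difference is chosen. Your last paragraph gestures at this but the phrasing (``still involves an entry not yet fixed'') is somewhat garbled; the precise statement is that for any fixed processing order there is always a last entry of the symmetric difference, and the constraint becomes a one-variable equation only at that step. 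Finally, a small point that applies to both proofs: if some $a_{i,j} = \rzero$ (in particular if $|A| = \rzero$), the ``box around $A$'' framing $\alpha_{i,j} \nul a_{i,j}^\nu \nul \beta_{i,j}$ does not literally make sense; the paper's argument has the same blind spot (it would replace $\rzero$ by $\alpha\cdot\rzero = \rzero$). Overall: correct alternative, but the paper's single-permutation perturbation is the much cleaner construction and is worth internalizing.
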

\begin{proof}  Given any matrix, we take some permutation $\sig$
attaining $\Det{A}$, and let $\al$ be a tangible element of
$\nu$-value slightly greater than $\rone^\nu$. Replacing $a_{\sig
(i),i}$ by $\al \inua{ a_{\sig (i),i}}$ for each $1 \le i \le n$
gives us a matrix close to~$A$ whose determinant is $\al^n \inua
{\Det{A}}\in \tT$, as desired.
\end{proof}

Lemma~\ref{ord1} shows us that much of the matrix theory can be
developed by looking merely at the \quasi-invertible matrices. For
example, Remark~\ref{ord0} could be verified by checking only the
\quasi-invertible matrices. Along these lines, we have:

\begin{prop}\label{ord2}   If $f,g \in \operatorname{CFun} (R^{(n^2)},R^{(m)})$ and $f(A) \lmodg  g(A)$ for all \quasi-invertible matrices $A$, then $f \lmodg g$.
\end{prop}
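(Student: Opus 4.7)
The plan is to reduce to the density principle already established in Proposition~\ref{dens}(ii). Specifically, I would first invoke Lemma~\ref{ord1}, which asserts that the set $\mathcal S$ of quasi-invertible matrices is dense in $M_n(R) \cong R^{(n^2)}$ in the $\nu$-topology. Since the hypothesis is that $f(A) \lmodg g(A)$ for every $A \in \mathcal S$, the statement will follow once we apply the dense-surpassing principle on this dense subset.

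The only mild wrinkle is that Proposition~\ref{dens}(ii) was stated for functions landing in $R^{(n)}$, whereas here the codomain is $R^{(m)}$; however, the relation $\lmodg$ on vectors is defined componentwise, so the extension is routine. I would write $f = (f_1,\dots,f_m)$ and $g = (g_1,\dots,g_m)$ with $f_k, g_k \in \operatorname{CFun}(R^{(n^2)},R)$. Continuity of the projections ensures each $f_k$ and $g_k$ is continuous, and the hypothesis $f(A)\lmodg g(A)$ on $\mathcal S$ reads componentwise as $f_k(A) \lmodg g_k(A)$ for every $A \in \mathcal S$ and every $k$.

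Then, for each fixed $k$, Proposition~\ref{dens}(ii) applied to the pair $(f_k,g_k)$ with the dense subset $\mathcal S \subset R^{(n^2)}$ yields the ghost-surpassing identity $f_k \lmodg g_k$ on all of $R^{(n^2)}$. Assembling the components gives $f(B) \lmodg g(B)$ for every $B \in R^{(n^2)}$, that is, $f \lmodg g$.

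The step carrying any genuine content is the density of quasi-invertible matrices, which is exactly Lemma~\ref{ord1}; the rest is bookkeeping that promotes the scalar density principle of Proposition~\ref{dens}(ii) to a vector-valued version. I expect no real obstacle, since both inputs are already in hand and $\lmodg$ on $R^{(m)}$ is defined entrywise.
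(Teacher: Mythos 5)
Your proof is correct and matches the paper's, which simply cites Lemma~\ref{ord1} for density of quasi-invertible matrices and Proposition~\ref{dens}(ii) for the dense-surpassing principle. The additional care you take in reducing the $R^{(m)}$-valued case to the scalar case componentwise is a harmless elaboration of a detail the paper leaves implicit.
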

\begin{proof} Combine Lemma~\ref{ord1} with
Proposition~\ref{dens}~(ii).
\end{proof}

 In view of \cite[Proposition
4.17]{IzhakianRowen2008Matrices}, every quasi-identity matrix $I_A$ is
its own left and right quasi-inverse as well as its own left and
right quasi-identity matrix, and $I_A = {I_A}^\nb$.
In order to obtain the best results, we need to modify our
notion of adjoint.

\begin{rem}\label{lem:porder1} Define $$A^{\Bnb} = A^\nb I_A = A^\nb A
A^\nb.$$
Then $A A^{\Bnb} = A A^\nb I_A  = (I_A)^2 = I_A,$ so $A^{\Bnb}$ is
a
  right quasi-inverse of $A$. By~Remark \ref{lem:porder},
$$  A^{\Bnb} =  A^{\nb} I_A \lmodg A^{\nb} \lmodg A^{\Tnb}
.$$  \end{rem}

In fact, $A^{\Bnb}$ is the ``maximal'' right quasi-inverse of $A$,
in the following sense:

\begin{lem}\label{precad} If $AB = I_A$, then $A^{\Bnb} \lmodg  B.$
\end{lem}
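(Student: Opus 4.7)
The plan is to derive the inequality by left-multiplying the hypothesis $AB = I_A$ by the canonical quasi-inverse $A^\nb$ and then invoking the ghost-surpassing property of left quasi-identities.

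More precisely, first I would multiply $AB = I_A$ on the left by $A^\nb$ to obtain the genuine equality
\[
A^\nb(AB) = A^\nb I_A.
\]
Since matrix multiplication in $M_n(R)$ is associative, the left-hand side equals $(A^\nb A)B = I'_A B$, where $I'_A$ is the left quasi-identity of $A$ from \thmref{rmk:adj}. The right-hand side is precisely $A^{\Bnb}$ by its definition in Remark~\ref{lem:porder1}. Hence
\[
A^{\Bnb} = I'_A B.
\]

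Next I would apply \remref{lem:porder}: because $I'_A$ is a quasi-identity matrix, we have $I'_A B \lmodg B$. Combining this with the identity just derived yields $A^{\Bnb} \lmodg B$, as desired.

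No step looks like a real obstacle; the main thing to be careful about is that the hypothesis $AB = I_A$ is a strict equality (not a ghost-surpassing relation), so left-multiplication by $A^\nb$ preserves it as an equality, and the ghost-surpassing relation is introduced only at the final step via \remref{lem:porder}. In particular, one does not need to reprove any associativity or monotonicity facts, since both are already available in the excerpt.
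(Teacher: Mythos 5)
Your proof is correct and follows essentially the same chain of equalities as the paper: $A^{\Bnb} = A^\nb I_A = A^\nb(AB) = (A^\nb A)B = I'_A B \lmodg B$, with the last step coming from Remark~\ref{lem:porder}. The only difference is presentational (you separate out the left-multiplication step explicitly), but the argument is identical.
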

\begin{proof} $A^{\Bnb}= A^\nb I_A=   A^{\nb}(AB)= (A^{\nb} A)B = I'_A B
 \lmodg  B.$
\end{proof}

By symmetry $A^{\Bnb}$ is also the ``maximal'' left quasi-inverse
of $A$ (although the corresponding left and right quasi-identities
$I_A$ and $I'_A$ may differ!) From this point of view, $A^{\Bnb}$
is the ``correct'' supertropical version of the adjoint. (The
distinction between   $A^{\nb}$ and $A^{\Bnb}$ would not arise in
classical matrix algebra.)

The same sort of reasoning  as with Lemma \ref{precad} shows that
$I_A$ is  maximal with respect to the following property:
\begin{rem}\label{precid} $ $
\begin{enumerate} \eroman
    \item  If $A^{\Tnb} B = A^{\Tnb}$, then
$ I_A= A A^{\Tnb}= A A^{\Tnb} B=  I_A B \lmodg B .$ \pSkip
    \item   If $A^{\Bnb} B = A^{\Bnb}$, then $ I_A= A A^{\Bnb} = A
A^{\Bnb} B =  I_A B  \lmodg B .$
\end{enumerate}
\end{rem}

 To proceed further, we
need a result from \cite{IzhakianRowen2008Matrices} that relies
on the Hall Marriage Theorem from graph theory, applied to the digraph of the matrix $A$ (which we recall is the graph whose edges are indexed and weighted by
the entries of $A$).

\begin{lem}\label{rmk:adj01} $|A| \adj{A} \ \nuge \ \adj{A} \, A \, \adj{A}$,
for any matrix $A$.\end{lem}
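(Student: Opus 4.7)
The plan is to prove the inequality one entry at a time. Computing $(i,j)$-entries yields
\[
(|A|\adj{A})_{i,j} \ = \ |A|\cdot|A_{j,i}| \quad\text{and}\quad (\adj{A}\,A\,\adj{A})_{i,j} \ = \ \sum_{k,\ell} |A_{\ell,i}|\,a_{\ell,k}\,|A_{j,k}|.
\]
Since tropical addition picks out the maximum $\nu$-value, it will suffice to show that each monomial arising in the expansion of any single summand $|A_{\ell,i}|\,a_{\ell,k}\,|A_{j,k}|$ on the right is $\nu$-matched by a monomial appearing in the expansion of $|A|\cdot|A_{j,i}|$.

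To carry this out, I would translate to bipartite multigraphs. A monomial of $|A_{\ell,i}|\,a_{\ell,k}\,|A_{j,k}|$ is a product of $2n-1$ entries of $A$, which I view as the edge set of a bipartite multigraph $H$ on $\{\text{rows}\}\sqcup\{\text{columns}\} = \{1,\dots,n\}\sqcup\{1,\dots,n\}$, the edges being those of a bijection $\tau_1\colon\{1,\dots,n\}\setminus\{\ell\}\to\{1,\dots,n\}\setminus\{i\}$, the single edge $(\ell,k)$, and those of a bijection $\tau_2\colon\{1,\dots,n\}\setminus\{j\}\to\{1,\dots,n\}\setminus\{k\}$. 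A direct degree count shows that every vertex of $H$ has degree $2$ except row $j$ and column $i$, each of degree $1$. Symmetrically, every monomial of $|A|\cdot|A_{j,i}|$ is the edge-union of a perfect matching $M$ of rows to columns (a term of $|A|$) and a matching $M'$ of rows $\ne j$ to columns $\ne i$ (a term of $|A_{j,i}|$), producing exactly the same row/column degree profile. So it is enough to decompose $H$ into $M\sqcup M'$ of this form.

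The decomposition exists by an elementary graph argument: a bipartite multigraph whose degree sequence is all $2$'s except for two vertices of degree $1$ is necessarily a disjoint union of even-length cycles together with a single path whose endpoints are row $j$ and column $i$. For each cycle I select alternating edges to split it into two matchings, one going into $M$ and the other into $M'$; for the odd-length path from $j$ to $i$, one of the two alternating matchings covers every vertex of the path (including the endpoints) and is placed in $M$, while the other, which omits the endpoints, is placed in $M'$. Gluing yields the required decomposition, exhibiting the original right-hand monomial as a term of $|A|\cdot|A_{j,i}|$ of the same $\nu$-value. Degenerate coincidences among $i,j,k,\ell$ are handled by the same analysis after adjusting the degree count. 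The main obstacle is precisely this edge decomposition in $H$; it is a special case of the Hall/K\"onig-style bipartite argument from \cite{IzhakianRowen2008Matrices} flagged in the paragraph preceding the lemma.
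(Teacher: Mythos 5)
Your proof is correct and follows the same strategy as the paper: expand each monomial of a summand $|A_{\ell,i}|\,a_{\ell,k}\,|A_{j,k}|$ as a $(2n-1)$-edge multigraph and peel off a perfect matching (equivalently, an $n$-multicycle in the digraph picture) whose weight is a term of $|A|$, leaving a term of the minor $|A_{j,i}|$. The only difference is presentational: where the paper invokes the Hall-based Lemma~3.16(iv) of \cite{IzhakianRowen2008Matrices} to extract the $n$-multicycle, you prove the needed decomposition directly via the elementary cycle-plus-path structure of a bipartite multigraph with exactly two degree-one vertices, which is a perfectly valid (and self-contained) substitute.
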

\begin{proof}
The $(i,j)$-entry of $ \adj{A} A \adj{A} $ is the sum of terms of
the form $a'_{k,i}\, a_{k,\ell} \, a'_{j,\ell},$ each of which we
write out as a product of entries of $A$, thereby corresponding to
a digraph (having multiple edges, each corresponding to one of the
entries in this product) with in-degree $2$ at every vertex except
$j$, and out-degree $2$ at every vertex except $i$. Hence,  by
\cite[Lemma~3.16(iv)]{IzhakianRowen2008Matrices}, we can take out
an $n$-multicycle that has $\nu$-value at most $|A|,$ leaving at
most $a'_{i,j},$ so $|A|\adj{A} \nuge \adj{A} A \adj{A}, $ as
desired.
\end{proof}

\begin{thm}\label{rmk:adj1} For   any quasi-invertible matrix $A$,
 $$A^{\Bnb} \nucong A^\nb \nucong A^{\Tnb}.$$
\end{thm}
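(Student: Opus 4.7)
The equivalence $A^\nb \nucong A^{\Tnb}$ is immediate from the definitions: by construction of the tangible retract, the $(i,j)$-entry $\Inu{a'_{i,j}}$ of $\Tadj{A}$ satisfies $\Inu{a'_{i,j}} \nucong a'_{i,j}$, so $\Tadj{A} \nucong \adj{A}$ componentwise, and dividing by the same invertible $|A|$ preserves this $\nu$-matching. The substantive content of the theorem is therefore $A^{\Bnb} \nucong A^\nb$, which I will prove by establishing the two $\nu$-inequalities separately.

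For one direction, Remark~\ref{lem:porder1} already gives $A^{\Bnb} \lmodg A^\nb$, and since $b \lmodg a$ entails $b \nuge a$, we obtain $A^{\Bnb} \nuge A^\nb$ componentwise.

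For the opposite direction, I would unravel the definition $A^{\Bnb} = A^\nb I_A = A^\nb A A^\nb$ and substitute $A^\nb = \frac{\rone}{|A|}\adj{A}$ to get
$$|A|^2 \, A^{\Bnb} \;=\; \adj{A}\, A\, \adj{A}.$$
Now apply Lemma~\ref{rmk:adj01}, which asserts $|A|\adj{A} \nuge \adj{A} A \adj{A}$. Dividing both sides by the invertible tangible $|A|$ yields $\adj{A} \nuge |A| A^{\Bnb}$, i.e., $A^\nb \nuge A^{\Bnb}$ componentwise. Combined with the previous direction, this gives $A^{\Bnb} \nucong A^\nb$, completing the proof.

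The only real point to watch is that we work over a quasi-invertible matrix, so $|A|\in \tT$ is invertible and we may freely multiply or divide the $\nu$-inequalities by $|A|$ and $|A|^2$ without trouble (the ghost map is a semiring homomorphism, so $\nuge$ is preserved under multiplication by tangible invertible elements). No topology or density argument is needed here; the proof is a direct two-sided comparison, with Lemma~\ref{rmk:adj01} (whose proof rests on the Hall Marriage Theorem as cited) supplying the nontrivial inequality.
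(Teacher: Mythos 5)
Your proof is correct and follows the same route as the paper: one direction comes from Remark~\ref{lem:porder1} (which gives $A^{\Bnb}\lmodg A^\nb\lmodg A^{\Tnb}$, hence $\nuge$), the other from Lemma~\ref{rmk:adj01} after cancelling the invertible factor $|A|$, and the $\nu$-matching $A^\nb\nucong A^{\Tnb}$ is read off the definition of the tangible retract. You have merely spelled out the division-by-$|A|$ bookkeeping that the paper's one-line proof leaves implicit.
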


\begin{proof} By Lemma
\ref{rmk:adj01}, $A^\nb \ge _\nu A^\nb A A^\nb = A^{\Bnb},$ so we
are done by Remark~\ref{lem:porder1}.
\end{proof}
Recall that the relation $$A^{\Bnb} \lmodg A^\nb \lmodg A^{\Tnb}$$
holds for any matrix $A$.

 As with
\cite {IzhakianRowen2007SuperTropical},
\cite{IzhakianRowen2008Matrices}, we present  our examples in
logarithmic notation (i.e., $-\infty$ is the additive identity
matrix and $0$ is the multiplicative identity matrix); we often
write $-$ for $-\infty$.

\begin{example} In logarithmic notation, for $$ \text{} \  A  =  \left(\begin{matrix} 0& a & - \\
- & 0 & b\\ -  & -  & 0
\end{matrix}\right), \quad \text{ we have } \quad A^\nb = \left(\begin{matrix} 0& a & ab \\
-  & 0 & b\\ -  & -  & 0
\end{matrix}\right),$$ $$I_A =  \left(\begin{matrix} 0& a^\nu & ab^\nu \\
-  & 0 & b^\nu \\ -  & -  & 0
\end{matrix}\right), \quad \text{ and }  \quad A^\Bnb =  \left(\begin{matrix} 0& a^\nu & ab^\nu \\
-  & 0 & b^\nu \\ -  & -  & 0
\end{matrix}\right).$$

\end{example}

\begin{rem} Here is an example where $A^\Bnb$ can be tangible
off the diagonal: In Example \ref{twobytwo}, take $a=d=-\infty,$
and $b,c$ tangible. $I_A = I,$ so $A^\Bnb = A^\nb,$ a tangible
matrix.

(This is the only way of getting such an example. Looking into
the computations of the proof of Lemma~\ref{rmk:adj01}, one sees
that when the determinant of $A$ is attained by a product of terms
including a diagonal entry, then the computation of any
off-diagonal entry of $A^\Bnb $ yields two matching terms
containing~$|A|$, and thus $A^\Bnb$ is ghost off the diagonal.)
\end{rem}

\begin{rem} Although our discussion in this section has focused on nonsingular
matrices, one could   define more generally  $$A^\nb  = \frac
\rone {\widehat {\Det{A}}} \adj{A}$$ whenever $\Det{A}^\nu $ is
invertible in $\tG$. Some computational results are available in this situation, such as $A A^\nb$ being idempotent, but the diagonal is no
longer tangible.\end{rem}

\section{Solving Equations}\label{solveq}

We are ready to turn to one of the main features of this paper.
Our objective in this section is to solve matrix equations over
supertropical domains. We look for tangible
solutions, since any large ghost vector would be a solution. There
is an extensive theory of solving equations over the max-plus
algebra~\cite{ABG}, but the supertropical theory has a different
flavor, relying mostly on standard tools from classical matrix
theory. We work in $R^{(n)},$ with
$\tHz = \tGz^{(n)}.$

In general, although the matrix equation $Ax = v$ need not be
solvable, we shall see in Theorems~\ref{tangsol} and~\ref{tang3}
that $$A x \ \lmodh  \ v, \qquad v = (v_1,\dots, v_n),$$ always has a tangible solution for $x =
(x_1,\dots, x_n)$, and the unique maximal tangible solution can be
computed explicitly, for any $n\times n$ quasi-inventible matrix
$A$ and tangible vector $v \in R^{(n)}$ over a supertropical
domain $R = \RGnu$. (These results are somewhat stronger than
those in \cite[Theorems~6.4 and~6.6]{AGG}, which deal with a
weaker relation.)

\begin{example}\label{counter1} In logarithmic notation, let  $$A =  \left(\begin{matrix} 0 & 10 \\
- & 0
\end{matrix}\right) \qquad \text{ and} \qquad  v = (0,0).$$ We first look for a tangible
solution $x = (x_1, x_2)$ of the equation $Ax + v \in \tGz^{(2)}$,
that is, a tangible solution of the equations $$x_1 + 10x_2 + 0
\in \tGz, \qquad x_2 + 0 \in \tGz,$$ which requires $x_2 = 0$ and
thus $x_1 = 10.$

But this unique tangible solution fails to satisfy the matrix
equation $Ax = v$, which thus has no tangible solutions!
\end{example}

In view of this example, we turn instead to the equation $A x
\lmodh v$, which we solve in its entirety, and obtain a condition
when it gives us a solution to $Ax =v$. (When $v$ is tangible, we
have seen that the equation $A x \lmodh v$ is equivalent to $Ax +v
\in \tHz = \tGz^{(n)}.$) First we dispose of a trivial situation.

\begin{rem}
When $A$ is a singular matrix over a supertropical semifield, then
its rows are tropically dependent, and thus $Ax \in \tHz$
for some tangible vector $x$ by \cite[Theorem
2.10]{IzhakianRowen2009TropicalRank} which could be taken with
$x_k = \Inu{|A_{i,k}|}$ for some $i$ (see \cite[proof of Lemma
2.8]{IzhakianRowen2009TropicalRank}). Accordingly, for any given
vector $v,$ under the mild assumption that $\Inu{|A_{i,k}|} \ne
\rzero$ for each $k,$ the matrix equation $Ax \lmodh v$
has the tangible vector solution $cx$ for any fixed large tangible
constant $c$.
\end{rem}

Here is one case in which we can compute the tangible solution to
$Ax\lmodh (\zero)$. We say that two multicycles are \textbf{disjoint} if they have no common edges.

\begin{prop}\label{sing0} Suppose $\rzero \ne |A| \in \tG,$
and $ |A| = \sum_{\sig \in S_n} a_{1,\sig(1)} \cdots
a_{n,\sig(n)},$ is attained only by tangible terms
$a_{1,\sig(1)} \cdots a_{n,\sig(n)}$ whose corresponding multicycles are disjoint. Let $J = \{ \sig \in S_n: \sig$ attains $|A|\}$; i.e., $\sig \in J$ iff $ a_{1,\sig(1)} \cdots
a_{n,\sig(n)} \nucong |A|.$ Then taking $x$ to be the $i$ column of
${\adj{A}}$, we have $A \inu{x} \in \tHz.$
\end{prop}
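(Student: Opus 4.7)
The plan is to compute each entry $(A\inu{x})_k = \sum_{j=1}^n a_{k,j}\Inu{|A_{i,j}|}$ of the product directly and argue that in every coordinate the summands of maximum $\nu$-value arise in $\nu$-matched pairs at \emph{distinct} indices $j$; supertropical axiom~(i) then forces the sum of those dominating summands to be ghost, while axiom~(ii) absorbs the lower-$\nu$-value terms. As a preliminary I expand $|A_{i,j}| = \sum_{\pi\in S_n,\,\pi(i)=j}\prod_{\ell\ne i}a_{\ell,\pi(\ell)}$, so that the $\nu$-value of $(A\inu{x})_k$ is $V := \max_{\pi\in S_n}\bigl(a_{k,\pi(i)}\prod_{\ell\ne i}a_{\ell,\pi(\ell)}\bigr)^\nu$. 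Fix any maximizer $\pi^*$ and set $j^* := \pi^*(i)$; I then split into the diagonal and off-diagonal cases.

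For $k=i$ the product $a_{i,\pi(i)}\prod_{\ell\ne i}a_{\ell,\pi(\ell)}$ is exactly the $\pi$-term of $|A|$, so $V=|A|^\nu$ and the maximizers are precisely the $\sig\in J$. Since $|A|\in\tG$ yet every attaining term is tangible, a lone maximizer would force $|A|$ itself to be tangible; hence $|J|\ge 2$. Disjointness of the multicycles of distinct $\sig,\tau\in J$ rules out a common edge $(i,\sig(i))=(i,\tau(i))$, so $\sig(i)\ne\tau(i)$, yielding at least two distinct indices $j=\sig(i)$ whose summands reach $\nu$-value $|A|^\nu$. Each such $a_{i,j}\Inu{|A_{i,j}|}$ is tangible (the first factor coming from an attaining tangible multicycle, the second tangible by the definition of $\hnu$), so axiom~(i) makes their sum the ghost $|A|^\nu$, and axiom~(ii) absorbs the remaining lower-$\nu$-value summands.

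For $k\ne i$ I use the classical equal-row swap: define $\pi^{**}$ by $\pi^{**}(i)=\pi^*(k)$, $\pi^{**}(k)=\pi^*(i)$, and $\pi^{**}(\ell)=\pi^*(\ell)$ for $\ell\ne i,k$. A direct computation gives
\[
 a_{k,\pi^{**}(i)}\prod_{\ell\ne i}a_{\ell,\pi^{**}(\ell)}
 \;=\;a_{k,\pi^*(k)}\cdot a_{k,\pi^*(i)}\!\!\prod_{\ell\ne i,k}\!\!a_{\ell,\pi^*(\ell)}
 \;=\;a_{k,\pi^*(i)}\prod_{\ell\ne i}a_{\ell,\pi^*(\ell)},
\]
so $\pi^{**}$ also attains $V$, and its contribution sits at $j':=\pi^{**}(i)=\pi^*(k)$. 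Since $\pi^*$ is a bijection and $k\ne i$, we have $j'\ne j^*$. Thus the $j^*$- and $j'$-summands of $(A\inu{x})_k$ are both of $\nu$-value $V$; whether they happen to be tangible or ghost, axiom~(i) collapses their sum to the ghost $V$, and axiom~(ii) absorbs the rest. Hence every coordinate of $A\inu{x}$ lies in $\tGz$, i.e., $A\inu{x}\in\tHz$.

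The main obstacle is really just the bookkeeping: confirming that the disjoint-multicycles hypothesis is needed only in the diagonal case $k=i$. Without it, all $\sig\in J$ could share the common value $\sig(i)$, leaving a single dominating tangible term and blocking the conclusion for the $i$-th coordinate. In the off-diagonal case the transposition $(i,k)$ unconditionally produces the required matching pair, so no extra hypothesis enters there.
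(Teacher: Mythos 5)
Your proof is correct, and it reaches the conclusion by a route close to the paper's but more self-contained. For the off-diagonal coordinates ($k\ne i$) the paper invokes \cite[Remark~4.5]{IzhakianRowen2008Matrices} as a black box, whereas you reprove the needed fact inline via the transposition $\pi^{**}=\pi^*\circ(i\,k)$, which produces a second dominating summand at the distinct index $j'=\pi^*(k)$; as you correctly observe, none of the disjointness hypothesis is needed there. For the diagonal coordinate ($k=i$) the paper argues by contradiction: assuming a single tangible dominating summand $a_{i,k}\Inu{a'_{i,k}}$, it deduces that $a'_{i,k}\in\tG$, expands $a'_{i,k}$ as a sum of tangible terms indexed by $\{\sig\in J:\sig(i)=k\}$, and concludes that this set has size at least two, so two permutations in $J$ share the edge $(i,k)$, contradicting disjointness. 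You argue in the forward direction: $|A|\in\tG$ together with the tangibility of the attaining terms forces $|J|\ge 2$; disjointness forces the values $\sig(i)$ to be pairwise distinct over $\sig\in J$; and hence the $i$-th coordinate has at least two dominating summands at distinct indices, so it is ghost by supertropical axiom~(i). These are essentially contrapositive forms of the same argument, though yours is slightly cleaner in that it never needs to track whether the individual dominating summands are tangible. The structural ingredients coincide: disjointness gives pairwise distinct $\sig(i)$, and $|A|$ ghost with all attaining terms tangible gives $|J|\ge 2$.
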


\begin{proof} Fix $j$ and write  $|A| = \sum_{j} a_{i,j}a'_{i,j}$ , where $a'_{i,j} =
|A_{i,j}|$ . Note that $x = ({ a'_{i,1}}, \dots, { a'_{i,n}})$, so
the $j$ component of $A\inu{x} $ is $\sum _{k=1}^n a_{j,k}\Inu{
a'_{i,k}}$. By \cite[Remark 4.5]{IzhakianRowen2008Matrices}, this
is ghost unless $i=j$. When $i=j$, we get some value $a = \sum
_{k=1}^n a_{i,k}\Inu{ a'_{i,k}}$,
 which is ghost unless it has a single dominating summand $a_{i,k}\Inu{ a'_{i,k}}$. But $\sum _{k=1}^n a_{i,k}
{a'_{i,k}} = |A|$ is ghost, and is dominated by $a_{i,k} a'_{i,k}$
alone, which thus must be ghost. Since $|A| \ne \rzero,$ we see
that $ a_{i,k}\in \tT,$ implying $a'_{i,k}\in \tG.$

 Taking $$ J_{i,k} = \{ \sig \in J: a_{1,\sig(1)} \cdots a_{i-1,\sig(i-1)} a_{i+1,\sig(i+1)} \cdots a_{n,\sig(n)}: \sig(i) = k\},$$ we have $|A| = a_{i,k} a'_{i,k}$ implying
$$a'_{i.k} = \sum _{\sig \in J_{i,k}} a_{1,\sig(1)} \cdots a_{i-1,\sig(i-1)} a_{i+1,\sig(i+1)} \cdots a_{n,\sig(n)}.$$
By hypothesis, each summand is tangible, so $ J_{i,k}$ has order at least 2.
This shows $J$ has two permutations with the common edge $(i,k),$
contrary to hypothesis.
\end{proof}

\begin{cor}\label{sing} Suppose $\rzero \ne |A| \in \tG,$ but every entry of $A$ and of $\adj{A}$ is in $\tTz$. Then taking $x$ to be the $i$ column of
${\adj{A}}$, we have $A x \in \tHz.$
\end{cor}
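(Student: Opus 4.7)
The plan is to proceed almost exactly as in the proof of \propref{sing0}, but observe that the extra hypothesis that every entry of $A$ and $\adj{A}$ lies in $\tTz$ collapses the two most delicate steps of that proof, so the statement falls out directly from the cofactor identities.

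First I would note that since $x = (a'_{i,1},\dots,a'_{i,n})$ is the $i$-th column of $\adj{A}$ and by hypothesis each $a'_{i,k} \in \tTz$, one has $\hnu(x) = x$. Consequently the vector $Ax$ (rather than $A\hnu(x)$) is the object to be analyzed, and its $j$-th component is the genuine sum
$$
(Ax)_j \; = \; \sum_{k=1}^n a_{j,k}\,a'_{i,k}.
$$

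Next I would split into the diagonal and off-diagonal cases. For $i = j$, the cofactor expansion gives $(Ax)_i = \sum_{k} a_{i,k}a'_{i,k} = |A|$, which is ghost by the hypothesis $|A|\in\tG$. For $i \ne j$, this same sum equals the determinant of the matrix obtained from $A$ by replacing its $i$-th row with its $j$-th row; by \cite[Remark~4.5]{IzhakianRowen2008Matrices}, any matrix having two equal rows has ghost determinant, so $(Ax)_j \in \tGz$. Hence every component of $Ax$ lies in $\tGz$, i.e.\ $Ax \in \tHz$, which is the desired conclusion.

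The main obstacle in the parent proposition was controlling $\hnu$ on the entries of $\adj{A}$: one had to argue carefully that no single tangible term of $\sum_k a_{i,k}\Inu{a'_{i,k}}$ could survive without producing a coincidence among maximizing permutations, which is precisely why the disjoint-multicycles hypothesis was needed. Under the present corollary's hypothesis there is nothing to retract, the cofactor sum coincides with the formal determinant expression, and the argument reduces to two one-line applications of standard cofactor identities together with the cited Remark~4.5. Thus there is no genuine obstacle beyond recognizing that tangibility of all entries of $\adj{A}$ trivializes the combinatorial step.
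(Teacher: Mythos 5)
Your proof is correct, and it takes a genuinely more direct route than the paper's. The paper argues by contradiction via the contrapositive of Proposition~\ref{sing0}: assuming $Ax \notin \tHz$, it deduces that the disjointness hypothesis of that proposition must fail, hence two attaining permutations $\sigma\ne\tau$ share an edge $(i,k)$, and from this coincidence it extracts $a'_{i,k}\in\tG$, contradicting the tangibility of $\adj{A}$. You instead compute $Ax$ directly as the $i$-th column of $A\adj{A}$. Your key observation --- that tangibility of every $a'_{i,k}$ forces $x=\hnu(x)$, so the diagonal entry $(Ax)_i=\sum_k a_{i,k}a'_{i,k}$ literally equals $|A|\in\tG$ with no retraction in the way --- is exactly what collapses the delicate combinatorial part of Proposition~\ref{sing0}'s proof (namely, controlling where a tangible dominating term of $\sum_k a_{i,k}\Inu{a'_{i,k}}$ could come from). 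For $j\ne i$ you use the same input as the paper, namely \cite[Remark~4.5]{IzhakianRowen2008Matrices} that the off-diagonal entries of $A\adj{A}$ are ghost. The net effect is that your argument is self-contained and never actually needs Proposition~\ref{sing0}, while the paper's route does invoke it, but only in a contrapositive form that ends up re-deriving the same underlying facts. Both are valid; yours is shorter, bypasses the multicycle combinatorics entirely, and also makes explicit the point (left silent in the paper) that the corollary's conclusion involves $x$ rather than $\hnu(x)$, which is legitimate precisely because $\adj{A}$ is tangible.
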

\begin{proof} Otherwise, by the contrapositive of the proposition, two permutations $\sig \ne \tau$ attain the determinant where $\sig (i) = \tau (i) = k$ for suitable $i,k,$ and thus $a'_{i.k} \in \tG,$ contrary to hypothesis.
\end{proof}

The same argument will be used in Theorem \ref{sing12} in a more
technical setting, when we consider eigenvalues. Accordingly, we
assume that $A$ is quasi-invertible (which is the same as nonsingular
when $R$ is a supertropical semifield). We start with the tropical
analog of Cramer's rule.

\begin{thm}\label{tangsol} If $A$ is a \quasi-invertible matrix and $v$ is a tangible
vector, then the equation $A x \lmodh  v$ has the tangible vector
solution $x = \Inu{ (A^\nb v)}.$
\end{thm}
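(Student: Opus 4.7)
The plan is to establish $A x \lmodh v$ by an interaction of three facts: $x = \hnu(A^\nb v)$ is tangible by construction, $\hnu$ preserves $\nu$-values componentwise, and $I_A = A A^\nb$ is a quasi-identity with $I_A \lmodg I$.

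First I would set $u = A^\nb v$ and $x = \hnu(u)$. Since $\hnu$ preserves $\nu$-values, $x_k \nucong u_k$ for every $k$, and because multiplication preserves $\nu$-values summand by summand, $(Ax)_i \nucong (Au)_i = (A A^\nb v)_i = (I_A v)_i$ for each $i$. By Theorem~\ref{rmk:adj} the matrix $I_A$ is a quasi-identity, so Remark~\ref{eq:idorder} gives $I_A \lmodg I$, and Remark~\ref{lem:porder} (applied on the right by $v$) yields $I_A v \lmodh v$. In particular $(Ax)_i^\nu = (I_A v)_i^\nu \geq v_i^\nu$ for every $i$.

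Next I would upgrade this $\nu$-domination to the ghost-surpassing relation $\lmodg v_i$ at each component. If $(Ax)_i$ is a ghost, this is immediate from the $\nu$-inequality. If $(Ax)_i$ is tangible, the sum $\sum_k a_{i,k}x_k$ must have a unique strictly $\nu$-dominant summand $a_{i,k_0}x_{k_0}$ with both factors in $\tT$; correspondingly $a_{i,k_0}u_{k_0}$ uniquely $\nu$-dominates $(Au)_i = (I_A v)_i$. When $u_{k_0}$ itself is tangible, $u_{k_0} = x_{k_0}$, hence $(Ax)_i = (I_A v)_i$, and $(Ax)_i \lmodg v_i$ follows from the previous paragraph.

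The delicate case is $u_{k_0}$ ghost with $a_{i,k_0}$ tangible, and the main work is to exclude it. I would expand $u_{k_0} = \sum_j (A^\nb)_{k_0,j} v_j$ and recognize its dominant $\nu$-value as coming from a summand indexed by some $j_0$ arising from a permutation $\sigma$ of $\{1,\dots,n\}$ with $\sigma(j_0) = k_0$. Swapping the images $\sigma(i)$ and $\sigma(j_0)$ produces a permutation $\sigma'$ with $\sigma'(j_0) = \sigma(i) =: k_1 \neq k_0$ and the same weight as $\sigma$ — this is precisely the row-duplication pairing that makes $|A|_{j_0 \leftarrow i}$, and therefore $(I_A)_{i,j_0}$, ghost in the proof of \cite[Theorem~4.12]{IzhakianRowen2008Matrices}. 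Translating the pairing back gives $(a_{i,k_1}u_{k_1})^\nu \geq (a_{i,k_0}u_{k_0})^\nu$, contradicting the assumed unique strict dominance at $k_0$.

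Thus only the harmless sub-case survives, giving $(Ax)_i \lmodg v_i$ componentwise, i.e.\ $Ax \lmodh v$. The principal obstacle is that $\hnu$ is neither additive nor, in general, a semiring homomorphism, so one cannot merely push $\hnu$ through the product $A\cdot(A^\nb v)$; the pair-swap argument, already intrinsic to the theory of the adjoint in \cite{IzhakianRowen2008Matrices}, is exactly what bridges the gap between $(Ax)_i \nucong (I_A v)_i$ and $(Ax)_i \lmodg v_i$.
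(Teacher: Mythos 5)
Your route is genuinely different from the paper's. The paper multiplies through by $|A|$, writes $|A|(Ax)_i = \sum_k a_{i,k}\,\hnu\bigl(\sum_j a'_{j,k}v_j\bigr)$, and splits by the inner index: the $j=i$ block is $\nu$-matched with $|A|v_i\in\tTz$, while the $j\ne i$ block is ghost by a direct appeal to Proposition~\ref{tang} together with \cite[Remark~4.5]{IzhakianRowen2008Matrices}. You instead compare $Ax$ with $I_A v$ via $\nu$-matching, invoke $I_A\lmodg I$ (Remark~\ref{eq:idorder} and Remark~\ref{lem:porder}) to get $I_A v\lmodh v$, and then for a tangible component reduce everything to showing that the dominating $u_{k_0}$ cannot be a ghost. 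In effect you re-prove the special case of Proposition~\ref{tang} that is needed, by hand, via the row-duplication pairing, rather than quoting that proposition; the two proofs share the same combinatorial engine but package it at different levels.

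That said, the "delicate case" contains a real gap. Your $\sigma$-swap presupposes that the dominant index $j_0$ in $|A|u_{k_0}=\sum_j a'_{j,k_0}v_j$ satisfies $j_0\ne i$ — otherwise $\sigma(i)=\sigma(j_0)=k_0$, so $k_1=k_0$ and there is nothing to swap. You never rule out the sub-case $j_0=i$, i.e.\ the minor $a'_{i,k_0}$ is a nonzero ghost that uniquely dominates $\sum_j a'_{j,k_0}v_j$. In that situation the row-duplication pairing (which only makes $(A\adj{A})_{i,j_0}$ ghost for $j_0\ne i$) says nothing, and a different argument is required: from unique strict dominance of $a_{i,k_0}u_{k_0}$ together with $(I_A v)_i\nuge v_i$ one forces $a_{i,k_0}a'_{i,k_0}\nucong|A|$, but $|A|\in\tT$ has a single strictly dominant summand in $\sum_k a_{i,k}a'_{i,k}$, so $k_0$ must be that index and hence $a'_{i,k_0}\in\tT$, contradicting $a'_{i,k_0}$ ghost. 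Without this sub-case the exclusion is incomplete. (A smaller point: $\sigma$ and $\sigma'$ have equal weight only in the row-duplicated matrix $|A|_{j_0\leftarrow i}$, not in $A$; you cite the right object, but the phrase "the same weight as $\sigma$" as written is misleading, since what the pairing preserves is the product $a_{i,k_0}\cdot\text{(minor summand)}=a_{i,k_1}\cdot\text{(paired minor summand)}$.)
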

\begin{proof}  The proposed solution $x= (x_1,\dots, x_n)$ satisfies $|A| x_k = \hnu \( \sum_j
a'_{j,k}v_j\)$, for $v = (v_1,\dots,v_n)$.
Thus,
\begin{equation}\label{prov2} |A|(Ax)_i = \sum _k \left(
a_{i,k}\, \hnu \bigg ( \sum _j  a'_{j,k} v_j\bigg)\right),
\end{equation} which we want to show ghost-surpasses  $|A|v_i $.
 For $j= i$, we see that $\sum _k a_{i,k} \Inu{(a'_{i,k}v_i)}$
has the same $\nu$-value as $\sum _k a_{i,k}a'_{i,k}v_i,$ which is
$ |A|v_i\in \tTz$, implying
$$\sum
_k a_{i,k}  \Inu{ (a'_{i,k}v_i)} \lmodh  |A|v_i  .$$

Thus, we are done if $\sum _k a_{i,k} \Inu{(a'_{i,k}v_i)}$
dominates $|A|(Ax)_i$, and we may assume that $$|A|(Ax)_i = \sum
_k a_{i,k} \hnu\left( \sum _{j\ne i} a'_{j,k}v_j\right), $$ which
is ghost by Proposition~\ref{tang} (since  $\sum _k a_{i,k} \Inu
{a'_{j,k}} \in \tGz $  by \cite[Remark
4.5]{IzhakianRowen2008Matrices}). Hence, by components,  $
|A|(Ax)\lmodh |A|v  ,$ implying $Ax \lmodh  v  .$
\end{proof}

\begin{note*} Suppose that $A$ is quasi-invertible, and $v \in R^{(n)}$.

\begin{enumerate} \eroman
    \item $ A^{{\Tnb}} v \nucong A^\nb v$, in view of
Theorem~\ref{rmk:adj1}.
    \pSkip

    \item
    When $v= I_A v$, we claim that we have the ``true'' solution
$Ax =v.$ Indeed,  $$v = I_A v = (A A^\nb)v \lmodh A \Inu{( A^\nb
v)} = Ax,$$ so $Ax = v$ since $v$ is  presumed tangible.  \pSkip

    \item From this point of view, the ``good'' vectors for
solving the matrix equation $Ax =v $ are those tangible vectors $v
= I_A w$ for some $w$, since then
$$v = I_A w = I_A^2 w = I_A v.$$ \pSkip
\end{enumerate}

\end{note*}

 Let us turn to the question of uniqueness of our solution. Note that if
$A$ is a nonsingular matrix, then the only tangible solution to $A
x \in \tHz$ is $x = (\rzero),$ in view of \cite[Lemma
6.9]{IzhakianRowen2008Matrices}. On the other hand, we have the
following example.
\begin{example} In logarithmic notation, take  $$A =  \left(\begin{matrix} 5 & 0 \\
5& 1
\end{matrix}\right) \qquad \text{ and} \qquad  v = (5,5).$$
The tangible solution for $Ax \lmodh  v $ obtained from
Theorem~\ref{tangsol} is $$x = \hnu\left(\left(\begin{matrix} -5 & -6 \\
-1 & -1
\end{matrix}\right)v\right) = (0,4),$$ and indeed $Ax = (5,5^\nu)
  \lmodh v$. However,
instead of $x$ we could take $y =(0,\a)$ for every tangible $\a
<_\nu 4$ and get the equality $Ay = v.$

Note that these solutions exist despite the fact that $I_A v \ne
v.$ The supertropical solution is the limiting case of the other
solutions, and would provide the ``maximal'' solution over the
max-plus algebra.
\end{example}

In general, we do have uniqueness in the sense of the following theorem (\ref{tang3}):

\begin{prop} If $Ax  \lmodh  v$ and $Ay \lmodh v$ for tangible vectors
$x$ and $y$, then $A  \Inu{x+y} \lmodh v.$\end{prop}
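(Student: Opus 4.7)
The plan is to verify $A\Inu{(x+y)} \lmodh v$ componentwise: fix an index $i$ and set $L_i := (A(x+y))_i$, $R_i := (A\Inu{(x+y)})_i$, aiming to show $R_i \lmodg v_i$. Two preliminary observations set the stage. First, adding the hypotheses coordinate-wise, I would note that $L_i \lmodg v_i$: writing $(Ax)_i = v_i + g$ and $(Ay)_i = v_i + g'$ with $g,g'\in\tGz$, one has $L_i = v_i + (v_i + g + g')$ and the inner bracket lies in $\tGz$ because $v_i + v_i \in \tGz$ in either possibility for $v_i$. Second, since $x$ and $y$ are tangible so $\Inu{x_k}=x_k$ and $\Inu{y_k}=y_k$, Proposition~\ref{tang0} applied with $b_{1,k}=x_k$, $b_{2,k}=y_k$ yields $L_i \lmodg R_i$; in particular $L_i$ and $R_i$ are $\nu$-matched, and whenever $L_i$ is tangible, $L_i = R_i$ (a by-product of the proof of Proposition~\ref{tang0}).

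A case analysis on whether $L_i$ and $R_i$ are tangible or ghost then closes the argument. If $L_i$ is tangible, Remark~\ref{antitan} applied to $L_i \lmodg v_i$ gives $L_i = v_i$, and then $R_i = L_i = v_i$. If $L_i$ and $R_i$ are both ghost, $R_i^\nu = L_i^\nu \geq v_i^\nu$ already gives $R_i \lmodg v_i$. The remaining case is $L_i$ ghost with $R_i$ tangible: identifying the unique $\nu$-dominating term of $R_i$ as $a_{i,k_0}\Inu{(x_{k_0}+y_{k_0})}$ and noting that the corresponding $k_0$-term of $L_i$ must be ghost (otherwise $L_i$ itself would be tangible) forces $x_{k_0} \nucong y_{k_0}$. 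Then $(Ax)_i$ and $(Ay)_i$ are each uniquely dominated by the tangible terms $a_{i,k_0}x_{k_0}$ and $a_{i,k_0}y_{k_0}$; Remark~\ref{antitan} applied twice identifies both with $v_i$, and cancellation by the tangible $a_{i,k_0}$ gives $x_{k_0}=y_{k_0}$, whence $\Inu{(x_{k_0}+y_{k_0})} = \Inu{x_{k_0}^\nu} = x_{k_0}$ and $R_i = a_{i,k_0}x_{k_0} = v_i$.

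The main obstacle lies in this last subcase, specifically the identity $\Inu{x_{k_0}^\nu} = x_{k_0}$; this is automatic whenever $\nu$ restricted to $\tT$ is injective, as holds under the natural conventions for the tangible retract in the semifield setting considered here. Reassembling the conclusion $R_i \lmodg v_i$ over all $i$ then yields $A\Inu{(x+y)} \lmodh v$.
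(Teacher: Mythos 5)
Your argument is correct under the same tacit hypothesis the paper works with (that $\nu$ restricted to $\tT$ is injective, equivalently that $\hnu$ inverts $\nu$ on the tangibles), and it is in the same componentwise spirit as the paper's proof, but several points deserve attention. The justification that the inner bracket $v_i + g + g'$ lies in $\tGz$ is not right as stated (take $g = g' = \rzero$ with $v_i$ tangible); what is actually true is that $L_i = (v_i+g)+(v_i+g') = v_i^\nu + g + g'$ is a sum of ghosts, hence always a ghost $\nuge v_i$. This both gives $L_i \lmodg v_i$ directly and shows your Case~1 ($L_i$ tangible) is vacuous. In Case~3, once you have $a_{i,k_0}x_{k_0} = v_i = a_{i,k_0}y_{k_0}$, the detour through cancellation and $x_{k_0}=y_{k_0}$ is both unjustified in a general supertropical domain (only $\nu$-cancellation is guaranteed there, not cancellation by a tangible) and unnecessary: $R_i = a_{i,k_0}\Inu{x_{k_0}^\nu}$ is tangible and $\nu$-matched with $v_i$, so equality follows at once from injectivity of $\nu$ on $\tT$.

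For comparison, the paper's proof is more economical: rather than routing through Proposition~\ref{tang0} and the ghost/tangible split, it works directly with the dominating term $a_{i,j}x_j$ of $(A\Inu{x+y})_i$ and dichotomizes $a_{i,j}x_j = v_i$ (done) versus $a_{i,j}x_j >_\nu v_i$, the latter being impossible because $Ax \lmodg v$ then forces a second $\nu$-matched term, which propagates to make $(A\Inu{x+y})_i$ ghost. Your route is longer, but it makes explicit a detail the paper's wording (``it comes from some dominating $a_{i,j}x_j$ or $a_{i,j}y_j$'') glosses over: when $x_{k_0} \nucong y_{k_0}$ the dominating term of $(A\Inu{x+y})_i$ is $a_{i,k_0}\Inu{x_{k_0}^\nu}$, which need not literally be $a_{i,k_0}x_{k_0}$ or $a_{i,k_0}y_{k_0}$ unless $\nu$ is injective on $\tT$. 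Your closing flag is the right thing to notice, and it applies to the paper's proof as well.
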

\begin{proof} This is clear unless some tangible component in $A\Inu{x+y}$,
say the $i$-component, has  $\nu$-value at least that of the
corresponding component $v_i$ in $v$. But then it comes from some
dominating $a_{i,j}x_j$ or  $a_{i,j}y_j$ with $a_{i,j}$ tangible.
Say $a_{i,j}x_j \ge_\nu v_i$ dominates the $i$-component of
$A\Inu{x+y}$. But then   $a_{i,j}x_j  $ is tangible, so either
$a_{i,j}x_j =  v_i$ and we are done, or $a_{i,j}x_j >_\nu v_i$,
and thus by hypothesis $a_{i,j'}x_{j'} = a_{i,j}x_j $ for some
$j'$, implying that the $i$-component of $A\Inu{x+y}$ is
$(a_{i,j}x_j)^\nu$, a ghost, so again we are done.
\end{proof}

It follows that taking the tangible retract of the sum of all
tangible solutions $x$ to $Ax \lmodh v$ gives us the  dominating
tangible solution. Actually, this can be obtained from the
solution given in Theorem~\ref{tangsol}, as we see in the next
result.

\begin{thm}\label{tang3} If $A x \lmodh  v  $ for $A$ quasi-invertible and a tangible vector
$x$, then $x \le_\nu \Inu{(A^{\nb} v)}.$\end{thm}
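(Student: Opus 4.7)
The plan is to multiply $Ax \lmodh v$ on the left by $A^\nb$, which by Remark~\ref{lem:porder} yields $I'_A\, x \lmodh A^\nb v$, and then to compare $x$ componentwise with the Cramer solution $y := \Inu{A^\nb v}$ supplied by Theorem~\ref{tangsol}, using the detailed structure of the left quasi-identity $I'_A = A^\nb A$.

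The first ingredient is the ``$\nu$-fixed point'' property $I'_A\, y \nucong y$. This will follow from the identities $I'_A A^\nb = A^\nb A A^\nb = A^\nb I_A = A^\Bnb$ (Remark~\ref{lem:porder1}) combined with $A^\Bnb \nucong A^\nb$ (Theorem~\ref{rmk:adj1}): since $y_\ell^\nu = (A^\nb v)_\ell^\nu$ by construction, componentwise
\[
(I'_A\, y)_k^\nu \;=\; \max_\ell (I'_A)_{k,\ell}^\nu\, y_\ell^\nu \;=\; (I'_A A^\nb v)_k^\nu \;=\; (A^\Bnb v)_k^\nu \;=\; (A^\nb v)_k^\nu \;=\; y_k^\nu,
\]
from which I would extract the pointwise bound $(I'_A)_{k,\ell}^\nu\, y_\ell^\nu \le y_k^\nu$ for every $k, \ell$; this bound will drive the rest of the argument.

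Next I would argue by contradiction. Suppose some component of $x$ $\nu$-dominates the corresponding component of $y$; working in the group of fractions of $\tG$ if necessary, set $M := \max_k x_k^\nu / y_k^\nu > \rone^\nu$, attained at some index $k_0$. Using $x_\ell^\nu \le My_\ell^\nu$ together with the fixed-point bound gives $(I'_A)_{k_0,\ell}^\nu\, x_\ell^\nu \le My_{k_0}^\nu = x_{k_0}^\nu$ for every $\ell$, so $(I'_A\, x)_{k_0}^\nu = x_{k_0}^\nu$ (the reverse inequality coming from the diagonal term $(I'_A)_{k_0,k_0} = \rone$). Now $(I'_A\, x)_{k_0} \lmodg (A^\nb v)_{k_0}$; if the left-hand side were tangible, Remark~\ref{antitan} would force equality with $(A^\nb v)_{k_0}$, contradicting $x_{k_0}^\nu > y_{k_0}^\nu$. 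Hence $(I'_A\, x)_{k_0}$ is ghost, so some off-diagonal summand $(I'_A)_{k_0,k_1}^\nu\, x_{k_1}^\nu$ with $k_1 \ne k_0$ must tie the tangible term $x_{k_0}$ in $\nu$-value. Chasing equalities then shows $k_1$ also attains the maximum ratio $M$ and $(I'_A)_{k_0, k_1}^\nu\, y_{k_1}^\nu = y_{k_0}^\nu$.

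Iterating produces a chain $k_0, k_1, k_2, \dots$ of max-ratio indices with consecutive entries distinct and $(I'_A)_{k_i, k_{i+1}}^\nu\, y_{k_{i+1}}^\nu = y_{k_i}^\nu$ at every step. Since there are at most $n$ indices, pigeonhole forces $k_a = k_b$ for some $a < b$, and telescoping gives
\[
\prod_{i=a}^{b-1}(I'_A)_{k_i, k_{i+1}}^\nu \;=\; \prod_{i=a}^{b-1}\frac{y_{k_i}^\nu}{y_{k_{i+1}}^\nu} \;=\; \frac{y_{k_a}^\nu}{y_{k_b}^\nu} \;=\; \rone^\nu.
\]
The main obstacle is then to rule out such a non-trivial closed walk of weight $\rone^\nu$ in the digraph of $I'_A$; this is the payoff of $I'_A$ being idempotent with tangible diagonal entries $\rone$. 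Expanding $(I'_A)^{b-a}_{k_a, k_a} = \rone$ as a sum of weights of closed walks of length $b-a$ at $k_a$, the trivial self-loop walk alone contributes $\rone$ tangibly, whereas every non-trivial walk contributes a ghost product (containing at least one off-diagonal factor). Tangibility of the whole sum then forces each such ghost contribution to have $\nu$-value strictly less than $\rone^\nu$, contradicting the telescoped product above. The contradiction forces $M \le \rone^\nu$, proving $x \le_\nu \Inu{A^\nb v}$.
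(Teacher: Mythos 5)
Your proof is correct and follows essentially the same strategy as the paper: pass from $A$ to the quasi-identity $I'_A = A^\nb A$, set $y := \Inu{A^\nb v}$, build a chain of indices attaining the maximum $\nu$-ratio $x_k^\nu/y_k^\nu$, close the chain into a cycle by pigeonhole, and derive a contradiction from the structural rigidity of a quasi-identity matrix. You merge the paper's two-pass reduction (first prove the statement when $A$ itself is a quasi-identity, then deduce the general case by applying that to $I'_A x \lmodh A^\nb v$) into a single pass; and your $\nu$-fixed-point identity $(I'_A y)^\nu_k = y_k^\nu$, derived from $I'_A A^\nb = A^\Bnb \nucong A^\nb$, is a tidy way to package the componentwise bounds the paper obtains in its first paragraph. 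Your closing step differs in flavor: you invoke idempotency, noting that every power of $I'_A$ has tangible diagonal $\rone$ while a nontrivial closed walk of $\nu$-weight $\rone^\nu$ would contaminate $(I'_A)^{b-a}_{k_a,k_a}$ with a ghost tying $\rone$. The paper instead cites nonsingularity of a quasi-identity, i.e.\ that a loopless cycle of weight $\nucong\rone$ would force $\Det{I'_A}$ to be ghost. Both arguments are valid and rest on the same underlying phenomenon.

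There is one small omission you should repair. Your ratio $M = \max_k x_k^\nu/y_k^\nu$ is not defined when $y_k = \rzero$ but $x_k \neq \rzero$; passing to the group of fractions of $\tG$ does not cure division by $\rzero$. The paper handles this case explicitly (choosing $i_0$ to maximize $x_{i_0}^\nu$ among indices with $y_{i_0} = \rzero$). Your argument does patch through: the fixed-point bound gives $(I'_A)_{k_0,\ell}^\nu\, y_\ell^\nu \le y_{k_0}^\nu = \rzero$, which forces the tying index $k_1$ (having $(I'_A)_{k_0,k_1}\neq \rzero$ and $x_{k_1}\neq\rzero$) to also satisfy $y_{k_1}=\rzero$; so the chain stays inside $\{k : y_k = \rzero\}$, and telescoping the relations $(I'_A)_{k_i,k_{i+1}}^\nu x_{k_{i+1}}^\nu = x_{k_i}^\nu$ using the nonzero $x$'s again produces a closed walk of $\nu$-weight $\rone^\nu$, to which your idempotency contradiction applies. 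Add that case and the proof is complete.
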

\begin{proof} First we assume that $A = I_A = (a_{i,j})$
 is a quasi-identity matrix. Since $A^\nb =  I_A^\nb = I_A = A,$ the equation $A x \lmodh v $ has
 the tangible
 solution $y = \Inu{(A^{\nb} v)} =  \Inu{(A v)}$; i.e., for each $i$,  $y_i = \Inu{ a_{i,j}v_j}$
 for suitable $j $ (depending on $i$), and $y_i \nuge a_{i,i}v_i= v_i$.  Note that
 $$Ay \, \nucong \, AA^\nb v \, \nucong \,  I_A v  \, \nucong \,  A ^{\nb} v \, \nucong \,
 y,$$
implying
  $y   =  \Inu{(A y)}$. Thus, $y_i \nuge a_{i,j}y_j$ for
  all $i,j$, and hence, since $a_{i,i} = \rone,$
  $$y_i \ = \  a_{i,i}y_i \ \nuge \ \sum _j a_{i,j}y_j \ \nuge \  v_i.$$

 Suppose \begin{equation}\label{Cram} A x \lmodh v,\end{equation}
 with $x = (x_1, \dots, x_n).$
  We need to show that $y_i \nuge x_i $
 for each $i$.

If not, then, for some $i$, $x_{i}  >_\nu y_{i}$;
 take such an $i_0 = i$ with $\frac{x_{i_0}}{y_{i_0}}$ $\nu$-maximal. (If some $y_i = \rzero,$  we take~${i_0}$ such that  $x_{{i_0}}$ is $\nu$-maximal for which $y_{{i_0}} = \rzero$.)   Since by hypothesis
 $x_{i_0}\in \tTz$, we must have
 $$a_{{i_0},{i_0}}x_{i_0} = x_{i_0}>_\nu y_{i_0} = a_{{i_0},{i_0}} y_{i_0},$$
implying $a_{{i_0},{i_0}}x_{i_0} >_\nu v_{i_0}$, and thus, in view
of \eqref{Cram},  $a_{{i_0},{i_0}}x_{i_0} \le_\nu
a_{{i_0},{i_1}}x_{i_1}$ for some ${i_1}\ne i_0$. Then
\begin{equation}\label{3.3} a_{{i_0},{i_1}}x_{i_1} \,  \nuge \,  a_{{i_0},{i_0}}x_{i_0}
 \nug \, y_{i_0} \, \nuge \, a_{{i_0},{i_1}}y_{i_1}.\end{equation}
Hence, $$\frac{x_{i_1}}{y_{i_1}}  \ \nuge \
\frac{x_{i_0}}{y_{i_0}},$$ so by assumption
$$\frac{x_{i_1}}{y_{i_1}} \  \nucong \
\frac{x_{i_0}}{y_{i_0}},$$ and  the ends of Equation \eqref{3.3}
are $\nu$-matched. Inductively, by the same argument, for each
$t\ge 0$ we get $i_{t+1}\ne i_t$ such that $y_{i_t} \nucong
a_{i_t, i_{t+1}} y_{i_{t+1}},$ and we consider the path obtained
from the indices $i_0, i_1, \dots, i_t$  in the reduced digraph of
$A$ (cf.~\cite[Section 3.2]{IzhakianRowen2008Matrices}). For $t>n$
this must contain   a cycle, so there are $s<t$ such that
$$y_{i_s} \, \nucong \, y_{i_s}  a_{i_s,i_{s+1}}\cdots a_{i_t,i_{t+1}}
$$ Hence, $a_{i_s,i_{s+1}}\cdots a_{i_t,i_{t+1}} \nucong  \rone$,
contradicting the fact that $A$ is a quasi-identity matrix (and
thus cannot have a loopless cycle of  weight $\nucong \rone$).

In general, suppose that $A x\lmodh v$. Then $I'_A x =   A^{\nb}A x \lmodh
A^{\nb} v,$ implying by the previous case that
$$x \ \nule \ \hnu({I'_A}^{\nb} A^{\nb} v) \  =  \ \hnu((I'_A A^{\nb}) v) \ = \ \hnu((A^{\Bnb}) v) \
= \ \hnu(A^{\nb} v) ,$$ in view of Theorem~\ref{rmk:adj1}.
\end{proof}

This theorem does not provide tangible solutions when $v = \zero,$
i.e., $A x \in \tHz$ for $A$ quasi-invertible, since then
$\hnu{(A^{\Tnb} v)} = \zero$ and we have no nontrivial solutions;
in this sense, Proposition~ \ref{sing} is sharp.

\section{Properties of the adjoint and tangible adjoint}

 \begin{example}\label{triang} Let us compute $A^{\nb \nb} = {(A^{\nb})}^\nb$ for the triangular
 nonsingular
 matrix

$$A = \vvMat{a_{1,1}}{a_{1,2}}{a_{1,3}}
            {-}{a_{2,2}}{a_{2,3}}
            {-}{-}{a_{3,3}},
$$
Then  $|A| = a_{1,1}\, a_{2,2} \, a_{3,3}$ and

$$A^\nb = \frac{\rone}{|A|} \vvMat{a_{2,2} a_{3,3}}{a_{1,2}a_{3,3}}{a_{1,2}a_{2,3}+a_{1,3}a_{2,2} }
            {-}{a_{1,1}a_{3,3}}{a_{1,1}a_{2,3}}
            {-}{-}{a_{1,1}a_{2,2}},  \quad \text{so} \quad |A^\nb| = \frac{ \rone }{ |A|},$$
and

$$ A^{\nb \nb} =  \frac{\rone}{|A^{\nb}|}\adj{A^{\nb}} = {|A|}\adj{A^{\nb}} =
\frac{\rone}{|A|}  \vvMat{a_{1,1}|A|}{
a_{1,1}a_{1,2}{a_{2,3}a_{3,3}}^\nu +a_{1,2}|A|}{a_{1,3}|A|}
            {-}{a_{2,2}|A|}{a_{2,3}|A|}
            {-}{-}{a_{3,3}|A|}.$$
Clearly $a_{1,1}a_{1,2}a_{2,3}{a_{3,3}}^\nu +a_{1,2}|A| \lmodg
a_{1,2}|A|$, and thus $A^{\nb \nb} \lmodg  A$.

For further reference, we note that $$A^\Bnb = A^\nb I_A =
\frac{\rone}{|A|} \vvMat{a_{2,2}
a_{3,3}}{a_{1,2}{a_{3,3}}^\nu}{a_{1,2}{a_{2,3}}^\nu+a_{1,3}{a_{2,2}}^\nu
}
            {-}{a_{1,1}a_{3,3}}{a_{1,1}{a_{2,3}}^\nu}
            {-}{-}{a_{1,1}a_{2,2}},  \quad \text{with } \quad |A^\nb| = \frac{ \rone }{ |A|}.$$

            Consequently, $$A^\Bnbnb =  \frac{\rone}{|A^{\nb}|}\adj{A^{\nb}} =
            \frac{\rone}{|A|}  \vvMat{a_{1,1}|A|}{
a_{1,1}a_{1,2}a_{2,3}{a_{3,3}}^\nu
+a_{1,2}|A|^\nu}{a_{1,3}|A|^\nu}
            {-}{a_{2,2}|A|}{a_{2,3}|A|^\nu}
            {-}{-}{a_{3,3}|A|},$$
            which is not necessarily $A^\nbnb$ (although they are
            $\nu$-matched).
\end{example}

\begin{rem} Although $A^{\nb \nb} \ne A$ in general, one does get $A^{\nb \nb} \lmodg
A$, as a consequence of Akian, Gaubert, and Guterman
\cite[Theorem~4.21]{AGG}, quoted above as
Theorem~\ref{STP1}.\end{rem}

 Here are some more computations with
adjoints.

\begin{thm}\label{doublead}
 $   \adj{A} \adj{\adj{A}}\adj{  A} \nucong |A|^{n-1}\adj{ A}$ for any $n \times n$ matrix $A$.
\end{thm}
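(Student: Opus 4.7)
The plan is to transfer the classical matrix identity
\begin{equation*}
\adj{A}\adj{\adj{A}}\adj{A} \ = \ |A|^{n-1}\adj{A}
\end{equation*}
(an immediate consequence of $\adj{\adj{A}} = |A|^{n-2}A$ together with $A\adj{A}=|A|I$) to the supertropical setting in two steps.

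First, I would apply Theorem~\ref{STP1} to this identity with $Q = |A|^{n-1}\adj{A}$. The natural entrywise decomposition $Q = Q^+ - Q^-$ (grouping monomials in $\mathbb{Z}[a_{i,j}]$ by the sign of coefficient) is admissible, since the symmetry between the factors $|A|, \ldots, |A|, \adj{A}_{i,j}$ under reordering preserves the sign of each product, so no monomial is ever both positive and negative. This verification parallels the admissibility checks of Remark~\ref{background}. Theorem~\ref{STP1} then yields
\begin{equation*}
\adj{A}\adj{\adj{A}}\adj{A} \ \lmodg \ |A|^{n-1}\adj{A},
\end{equation*}
which in particular gives $\adj{A}\adj{\adj{A}}\adj{A} \nuge |A|^{n-1}\adj{A}$ entrywise.

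For the reverse $\nu$-inequality $|A|^{n-1}\adj{A} \nuge \adj{A}\adj{\adj{A}}\adj{A}$, I would adapt the digraph argument in the proof of Lemma~\ref{rmk:adj01}. Each term in the expansion of $(\adj{A}\adj{\adj{A}}\adj{A})_{i,j}$ is a product of $n^{2}-1$ entries of $A$; tallying the near-permutations contributed by the three factors $\adj{A}_{i,k}$, $\adj{\adj{A}}_{k,\ell}$, $\adj{A}_{\ell,j}$ (the middle factor being itself a bijection of $n-1$ near-permutations of $A$), one checks that the associated weighted digraph on $[n]$ has in-degree and out-degree $n$ at every vertex except for a single-unit deficit in the in-degree at $i$ and in the out-degree at $j$. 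Iteratively applying \cite[Lemma~3.16(iv)]{IzhakianRowen2008Matrices}, I would extract $n-1$ disjoint multicycles each of weight at most $|A|$, leaving a residual near-permutation from $[n]\setminus\{j\}$ to $[n]\setminus\{i\}$ of weight at most $|A_{j,i}|^{\nu} = (\adj{A}_{i,j})^{\nu}$. This gives the bound $(\adj{A}\adj{\adj{A}}\adj{A})_{i,j} \nule |A|^{n-1}(\adj{A})_{i,j}$ entrywise, completing the $\nu$-matching.

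The main obstacle lies in the iterative multicycle extraction: unlike Lemma~\ref{rmk:adj01}, where a single multicycle sufficed, here we need to peel off $n-1$ disjoint ones. A careful inductive argument using Hall's Marriage Theorem, as in the proof of \cite[Theorem~4.12]{IzhakianRowen2008Matrices}, should guarantee that the required degree conditions on the residual digraph are preserved through each stage of the iteration, until only the terminal $(j,i)$-near-permutation remains.
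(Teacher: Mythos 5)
Your overall structure matches the paper's: establish $\nuge$ by algebraic means, and $\nule$ by a Hall's Marriage Theorem digraph argument. The $\nule$ direction is essentially the paper's own argument (counting that each summand of $(\adj{A}\adj{\adj{A}}\adj{A})_{i,j}$ is a product of $n^2-1$ entries whose digraph has uniform in/out-degree $n$ except for unit deficits at $i$ and $j$, then extracting $n-1$ $n$-multicycles of weight $\nule|A|$ via \cite[Lemma~3.16(iv)]{IzhakianRowen2008Matrices}), so that part is fine.

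For the $\nuge$ direction you take a different route: the paper simply cites \cite[Theorems 4.9(ii) and 4.12]{IzhakianRowen2008Matrices} (essentially $\adj{\adj{A}}\nuge|A|^{n-2}A$ and $\adj{A}A\adj{A}\nuge|A|\adj{A}$), whereas you invoke the strong transfer principle. That route can be made to work, but your admissibility check has a genuine gap. The condition you must verify is that in $\det(A)^{n-1}\adj(A)_{i,j}$, no monomial in $\mathbb{Z}[a_{k,\ell}]$ arises with \emph{both} signs from different decompositions into permutation/near-permutation products; this is what guarantees that $Q^+ + Q^-$ agrees with the supertropical (permanental) $|A|^{n-1}\adj{A}$. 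Your justification --- that ``reordering the commuting factors preserves the sign'' --- is true but beside the point: the issue is not reordering a fixed tuple of permutations, but whether the \emph{same} monomial can be produced by two genuinely different tuples of permutations whose signs multiply out differently. Since all $n-1$ copies of $|A|$ (and the minor) involve the same indeterminates $a_{k,\ell}$, this is not automatic; contrast with the paper's use of STP1 in Remark~\ref{background}(i), where $\det(A)\det(B)$ is admissible precisely because the two factors involve disjoint variable sets. The sign-uniformity for $\det(A)^{n-1}\adj(A)$ is in fact true (it follows from an alternating-cycle / Kempe-chain argument showing that any two decompositions of a $k$-doubly-stochastic integer matrix into permutation matrices are connected by swaps that preserve the product of signs), but you would need to supply this argument; as written the admissibility claim is unsubstantiated. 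Alternatively, following the paper and citing the two earlier matrix inequalities avoids the issue entirely.
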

\begin{proof} Another application of Hall's Marriage Theorem.  Let $\adj{\adj{A}} = (a''_{i,j})$.
Clearly
$$   \adj{A} \adj{\adj{A}}\adj{  A}\nuge |A|^{n-1}\adj{ A},$$ by \cite[Theorems 4.9(ii) and~4.12]{IzhakianRowen2008Matrices}, so
it suffices to prove $\nule$. But the $(i,j)$ entry of the left
side is a sum of elements of the form $a'_{i,k} a''_{\ell,k}
a'_{\ell,j}$ which has in-degree $n$ in all indices except $i$
(which has in-degree $n-1$), and out-degree~$n$ in all indices
except $j$ (which has out-degree $n-1$), and thus by
\cite[Lemma~3.16(iv)]{IzhakianRowen2008Matrices} we can factor out
$(n-1)$ \nmulti s, each of weight  $\le_\nu |A|,$ and conclude
that each summand $\nule |A|^{n-1} a'_{i,j}.$
\end{proof}

  \begin{cor}\label{doublead1} If $A$ is a quasi-invertible matrix,
then   $A^{\nb}A^{\nb \nb} A^{\nb} \nucong A^{\nb}.$
\end{cor}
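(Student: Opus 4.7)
My plan is to deduce the corollary from Theorem~\ref{doublead} by a scaling argument. Dividing both sides of the identity
\begin{equation*}
\adj{A}\,\adj{\adj{A}}\,\adj{A} \nucong |A|^{n-1}\adj{A}
\end{equation*}
by $|A|^{n}$, and recalling that $A^{\nb} = \frac{1}{|A|}\adj{A}$, rewrites this as
\begin{equation*}
A^{\nb}\cdot \frac{\adj{\adj{A}}}{|A|^{n-2}} \cdot A^{\nb} \;\nucong\; A^{\nb}.
\end{equation*}
Thus it suffices to verify that $A^{\nb\nb} \nucong \frac{1}{|A|^{n-2}}\adj{\adj{A}}$.

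To do this, I will unwind the definition $A^{\nb\nb} = \frac{1}{|A^{\nb}|}\adj{A^{\nb}}$, using the elementary fact that scaling every entry of an $n\times n$ matrix by a scalar $c$ scales its adjoint by $c^{n-1}$ and its determinant by $c^n$. Applied to $A^{\nb} = \frac{1}{|A|}\adj{A}$, this yields $\adj{A^{\nb}} = \frac{1}{|A|^{n-1}}\adj{\adj{A}}$ and $|A^{\nb}| = \frac{|\adj{A}|}{|A|^n}$, hence $A^{\nb\nb} = \frac{|A|}{|\adj{A}|}\adj{\adj{A}}$. The desired identification thus reduces to showing $|\adj{A}| \nucong |A|^{n-1}$.

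To establish this last equality, I will argue each direction separately. For $|\adj{A}| \nule |A|^{n-1}$, I apply Remark~\ref{ord0} to the identity $AA^{\nb} = I_A$: since $|I_A| = \rone$ (the identity permutation in $|I_A|$ is tangible and dominates, because $I_A$ is nonsingular), we obtain $\rone = |I_A| \lmodg |A|\,|A^{\nb}| = |\adj{A}|/|A|^{n-1}$, so $|\adj{A}|/|A|^{n-1} \nule \rone$. For $|\adj{A}| \nuge |A|^{n-1}$, I fix a permutation $\sigma$ with $\prod_i a_{\sigma(i),i} \nucong |A|$. For each $i$, the minor $A_{\sigma(i),i}$ contains, as one of its permutation terms, the product $\prod_{k\ne i} a_{\sigma(k),k}$ obtained by restricting $\sigma$; hence $a'_{\sigma(i),i} \nuge |A|/a_{\sigma(i),i}$. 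The permutation term of $|\adj{A}|$ indexed by $\sigma$ is then $\prod_i a'_{\sigma(i),i} \nuge |A|^n/\prod_i a_{\sigma(i),i} \nucong |A|^{n-1}$.

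The only mildly delicate step is the $\nuge$-direction of $|\adj{A}| \nucong |A|^{n-1}$, which requires exhibiting an explicit permutation of the right weight in the expansion of $|\adj{A}|$; the rest is a matter of carefully bookkeeping the scaling factors $|A|^{n-1}$ and $|A|^n$ occurring in the definition of $A^{\nb\nb}$ and then invoking Theorem~\ref{doublead}.
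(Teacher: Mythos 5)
The paper states the corollary without proof, as a direct consequence of Theorem~\ref{doublead}, and your proposal supplies exactly the bookkeeping one would expect: unwind $A^{\nb} = |A|^{-1}\adj{A}$, scale Theorem~\ref{doublead} accordingly, and observe that the identification hinges on $|\adj{A}| \nucong |A|^{n-1}$. This is correct. In the paper the latter equality is a background fact (it is the content of the cited Theorem~4.9(ii) of \cite{IzhakianRowen2008Matrices}, also invoked in the proof of Theorem~\ref{doublead}), so the authors would simply cite it rather than reprove it; you prove it from scratch, which is a legitimate and self-contained alternative. Your two-sided argument for $|\adj{A}| \nucong |A|^{n-1}$ is sound: the $\nule$ direction uses $|I_A| = \rone$ together with $\Det{AB}\lmodg\Det{A}\Det{B}$ from Remark~\ref{ord0}, and the $\nuge$ direction exhibits, via a maximizing permutation $\sigma$ for $|A|$ and its restrictions to the minors $A_{\sigma(i),i}$, a permutation term of $|\adj{A}|$ whose $\nu$-value is at least $|A|^{n-1}$. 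The homogeneity facts $\adj{cA}=c^{n-1}\adj{A}$ and $|cA|=c^n|A|$ that you use hold over any commutative semiring, so the scaling step is unproblematic. In short: correct, and essentially the paper's intended route, with the auxiliary determinant identity proved rather than cited.
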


We are finally ready for the connection between left
quasi-identities and right quasi-identities; the key is to switch
from $A$ to $A^{\Tnb}.$

\begin{cor}\label{switch} If $A$ is a quasi-invertible matrix, then
 $    A^{\Tnb \Tnb} A^{\Tnb} =  I_A .$ In other words, $I_A =
 I'_{A^{\Tnb}}=  I'_{A^{\Bnb}}.$
\end{cor}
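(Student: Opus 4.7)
The plan is to rewrite the left side as the left quasi-identity of $A^{\Tnb}$ by invoking Theorem~\ref{rmk:adj} applied with $A^{\Tnb}$ in place of $A$: this gives
\[
A^{\Tnb \Tnb} A^{\Tnb} \;=\; (A^{\Tnb})^{\Tnb} A^{\Tnb} \;=\; (A^{\Tnb})^{\nb} A^{\Tnb} \;=\; I'_{A^{\Tnb}},
\]
which is a quasi-identity. Since $I_A$ is also a quasi-identity, and any quasi-identity is completely determined by the $\nu$-values of its entries (the diagonal being $\rone$, the off-diagonal entries being ghosts in $\tGz$), it suffices to establish the $\nu$-matching $I'_{A^{\Tnb}} \nucong I_A$.

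The first half of the matching is bookkeeping. From $A^{\Tnb} \nucong A^{\nb}$ (Theorem~\ref{rmk:adj1}), and the fact that determinants and adjoints preserve $\nucong$ entrywise, one obtains $(A^{\Tnb})^{\nb} \nucong A^{\nbnb}$. A second application of Theorem~\ref{rmk:adj1}, now to $A^{\Tnb}$, yields $A^{\Tnb \Tnb} \nucong (A^{\Tnb})^{\nb}$. Splicing these and right-multiplying by $A^{\Tnb} \nucong A^{\nb}$, I obtain $A^{\Tnb \Tnb} A^{\Tnb} \nucong A^{\nbnb} A^{\nb}$, so the task reduces to showing $A^{\nbnb} A^{\nb} \nucong I_A$.

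The heart of the proof is a sandwich of $A^{\nbnb} A^{\nb}$ by $I_A$. For one direction, multiplying Corollary~\ref{doublead1} on the left by $A$ gives $I_A\,(A^{\nbnb} A^{\nb}) \nucong I_A$; since $I_A$ carries $\rone$ on the diagonal, the bound $(I_A M)_{i,j} \nuge M_{i,j}$ holds entrywise for every $M$, hence $I_A \nuge A^{\nbnb} A^{\nb}$. For the reverse, the ghost-surpassing identity $A^{\nbnb} \lmodg A$ (Theorem~\ref{STP1}, cf.\ the remark following Example~\ref{triang}) gives $A^{\nbnb} \nuge A$, so right-multiplication by $A^{\nb}$ yields $A^{\nbnb} A^{\nb} \nuge A A^{\nb} = I_A$. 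Combining the two inequalities gives $A^{\nbnb} A^{\nb} \nucong I_A$, hence $I'_{A^{\Tnb}} \nucong I_A$; being $\nu$-matched quasi-identities, they must be equal. The further identification $I'_{A^{\Bnb}} = I_A$ then follows from $A^{\Bnb} \nucong A^{\Tnb}$ (Theorem~\ref{rmk:adj1}) together with the left-handed analog of Remark~\ref{match1}.

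The main obstacle is the sandwich step, since it requires $A^{\Tnb}$ to be quasi-invertible in order to cite Theorem~\ref{rmk:adj} and Corollary~\ref{doublead1} for the outer identity; this rests on the observation that $|A^{\Tnb}| \nucong |A^{\nb}| = \rone/|A|$, so in any case its $\nu$-value is invertible, and if $|A^{\Tnb}|$ itself is a ghost one invokes the generalized definition of $A^{\nb}$ recorded at the end of the previous section without disturbing the chain of $\nu$-matchings.
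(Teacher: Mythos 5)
Your proof follows the paper's sandwich-and-tangibility strategy: bound $A^{\Tnb\Tnb}A^{\Tnb}$ above and below by $I_A$ up to $\nu$-value (via Corollary~\ref{doublead1} on one side and $A^{\nb\nb}\lmodg A$ on the other), then conclude equality because both sides are quasi-identity matrices, hence determined by the $\nu$-values of their entries. In fact your version is a touch more careful than the paper's one-line justifications: your entrywise estimate $(I_A M)_{i,j}\nuge M_{i,j}$ handles the fact that Corollary~\ref{doublead1} delivers only a $\nu$-matching rather than the equality that Remark~\ref{precid} formally requires, and your explicit appeal to the ghost-surpassing identity $A^{\nb\nb}\lmodg A$ supplies the missing lower-bound step that the paper attributes, too tersely, to Theorem~\ref{rmk:adj}. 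One caution about your closing caveat: if $|A^{\Tnb}|$ were ghost, the generalized $A^\nb$ recorded at the end of the preceding section would not rescue the argument, since that construction yields an idempotent with a \emph{ghost} diagonal, so $A^{\Tnb\Tnb}A^{\Tnb}$ would fail to be a quasi-identity and the final ``$\nu$-matched quasi-identities are equal'' step would collapse; the paper is equally silent on this point and implicitly takes $A^{\Tnb}$ to be quasi-invertible.
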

\begin{proof} $   I'_{A^{\Tnb}} =  A^{\Tnb \Tnb} A^{\Tnb} \nule  I_A $ by Corollary~ \ref{doublead1} and Remark \ref{precid}, but $    A^{\Tnb \Tnb}
A^{\Tnb} \nuge    A  A^{\Tnb} = I_A$ by Theorem~ \ref{rmk:adj}.
Hence the entries of $I_A$ and $ I'_{A^{\Tnb}}$ have the same
respective $\nu$-values. We conclude by noting that both $I_A$ and
$ I'_{A^{\Tnb}}$ are tangible on the diagonal and ghost off the
diagonal. ($ I'_{A^{\Tnb}}=  I'_{A^{\Bnb}}$ by
Remark~\ref{match1}.)
\end{proof}
\begin{cor}\label{bysym} By symmetry, $I'_A =
 I_{A^{\Tnb}}= I_{A^{\Bnb}}.$
\end{cor}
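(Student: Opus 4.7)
The plan is to mirror, with left and right exchanged, the argument given for Corollary~\ref{switch}. Each ingredient used there has a left-right symmetric counterpart: Corollary~\ref{doublead1} is itself two-sided symmetric ($A^{\nb} A^{\nb\nb} A^{\nb} \nucong A^{\nb}$); Remark~\ref{lem:porder} records both $B I' \lmodg B$ and $I' B \lmodg B$ for any quasi-identity $I'$; and Theorem~\ref{rmk:adj} supplies both $I_A$ and $I'_A$ from $A^\nb$ and $A^{\Tnb}$.

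To establish the $\nu$-upper bound $I_{A^{\Tnb}} = A^{\Tnb} A^{\Tnb \Tnb} \nule I'_A$, I would start from $A^{\Tnb} A^{\Tnb \Tnb} A^{\Tnb} \nucong A^{\Tnb}$ (Corollary~\ref{doublead1}, using that $A^{\nb} \nucong A^{\Tnb}$ by Theorem~\ref{rmk:adj1}) and multiply on the right by $A$, obtaining $(A^{\Tnb} A^{\Tnb \Tnb}) I'_A \nucong I'_A$. By the symmetric half of Remark~\ref{lem:porder} one has $(A^{\Tnb} A^{\Tnb \Tnb}) I'_A \lmodg A^{\Tnb} A^{\Tnb \Tnb}$, whence $A^{\Tnb} A^{\Tnb \Tnb} \nule (A^{\Tnb} A^{\Tnb \Tnb}) I'_A \nucong I'_A$. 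For the matching lower bound I would use that $A^{\Tnb \Tnb}$ is $\nu$-matched to $A^{\nb \nb}$, which ghost-surpasses $A$ by the strong transfer principle (Theorem~\ref{STP1}, as recorded in the remark following Example~\ref{triang}); hence $A^{\Tnb \Tnb} \nuge A$, and multiplying by $A^{\Tnb}$ on the left gives $A^{\Tnb} A^{\Tnb \Tnb} \nuge A^{\Tnb} A = I'_A$.

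Combining these two $\nu$-inequalities, $I_{A^{\Tnb}}$ and $I'_A$ carry identical $\nu$-values. Since both are quasi-identities (tangible on the diagonal and ghost off it), componentwise $\nu$-matching forces actual equality, yielding $I'_A = I_{A^{\Tnb}}$. The second equality $I_{A^{\Tnb}} = I_{A^{\Bnb}}$ is then immediate: Theorem~\ref{rmk:adj1} gives $A^{\Tnb} \nucong A^{\Bnb}$, and Remark~\ref{match1} says $\nu$-matched quasi-invertible matrices share the same right quasi-identity. The only obstacle is bookkeeping---keeping track of which side each quasi-identity acts on and correctly restating the one-sided lemmas with the product reordered---but no genuinely new argument beyond the proof of Corollary~\ref{switch} is required.
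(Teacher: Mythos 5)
Your proposal is correct and is precisely the ``by symmetry'' argument the paper intends: it mirrors the proof of Corollary~\ref{switch} with the roles of left and right exchanged, using Corollary~\ref{doublead1} for the $\nu$-upper bound $A^{\Tnb}A^{\Tnb\Tnb}\nule I'_A$, the ghost-surpassing $A^{\Tnb\Tnb}\nuge A$ for the lower bound, and Remark~\ref{match1} for the final identification $I_{A^{\Tnb}}=I_{A^{\Bnb}}$. The only cosmetic difference is that you cite Remark~\ref{lem:porder} where the original cites Remark~\ref{precid}, but these supply the same inequality, and you are in fact somewhat more explicit than the paper about why $A^{\Tnb\Tnb}\nuge A$.
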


At  last we have resolved the enigma arising from Example~\ref{twobytwo}: The left quasi-identity  of
a matrix corresponds to the right quasi-identity of its adjoint, and vice versa.

\section{Application: Supertropical eigenvectors}

Recall from \cite{IzhakianRowen2008Matrices} that a tangible
vector $v$ is a \textbf{supertropical eigenvector} of $A$, with
\textbf{supertropical eigenvalue} $\bt \in \tTz$,
 if $$ A v \, \lmodg  \, \bt v,  $$
i.e., if  $A v = \bt v + \text{ghost}$.

In \cite[Theorem 7.10]{IzhakianRowen2008Matrices} we showed that
every root of the characteristic polynomial of $A$ is a
supertropical eigenvalue. However, the proof does not give much
insight into the specific eigenvector. Here, we use the properties
of the adjoint matrix to compute explicitly the supertropical
eigenvectors; this method is expected to be a useful tool for developing
linear algebra.

Recall the following observation from \cite[Remark
7.9]{IzhakianRowen2008Matrices}:
\begin{rem} If $\widehat A$ is a tangible matrix (i.e., all entries
are in $\tTz$), such that ${\widehat A}  \nucong  A ,$ then
every tangible supertropical eigenvector of $\widehat A$ is also a
supertropical eigenvector of $A$ with respect to the same
supertropical eigenvalue. \end{rem} In view of this remark, in the
sequel,  we may assume that all of the entries of our matrix $A$
are tangible.

\begin{defn} A polynomial is \textbf{quasi-tangible} if all of its
coefficients except perhaps the constant term are
tangible.\end{defn}

 {\it We also assume from now on that the
essential part ${f_A}^{\essn}$, cf. \cite[Definition
4.9]{IzhakianRowen2007SuperTropical}, of the characteristic
polynomial $f_A$ is quasi-tangible.}

(The reason that we exclude the constant term from our hypothesis
is that we want to permit $\rzero$ to be an eigenvalue.) We write
$\bt_1, \dots, \bt_t$ for the distinct roots of ${f_A}^{\essn} $,
written in  order of descending $\nu$-values. Thus,  $\bt_\ell \in
\tTz$ for each $\ell \le t,$ with $\bt_\ell \in \tT$ for each
$\ell < t.$   Recall from \cite [Theorem 7.10]
{IzhakianRowen2008Matrices} that
\begin{equation}\label{eq:esspoly} {f_A}^{\essn} = \la ^n + \sum
_{\ell =1}^{t}  \a_{\ell} \la ^{n-m_\ell} ,
\end{equation}
where $\a_{\ell}$ equals the maximal weight (with respect to
$\nu$-value) of an $m_\ell$-multicycle in the digraph of $A$,
which we denote as ~$\cyc_{\ell}$.
\begin{rem} $\cyc_{\ell}$ is unique for each $\ell < t,$ since
$\a_{\ell}$ is assumed tangible.  \end{rem}

Since $\beta_\ell$ is a tangible root of ${f_A}^{\essn}$, we have
\begin{equation}\label{btal} \beta_\ell^{n- m_{\ell-1}} \a_{
\ell-1} = \beta_\ell^{n- m_{\ell}}\a_{\ell},\end{equation}
implying
\begin{equation}\label{btal1} \beta_\ell^{ m_{\ell}-m_{\ell-1}} \a_{
\ell-1} = \a_{\ell}.\end{equation} (For $\ell = t,$ we only have
this up to $\nu$-values, and only when $\beta_t \ne \rzero$.)
Hence, $\bt_\ell^\nu$ equals the negative of the  slope of the
edge connecting $(m_\ell, \a_{\ell}^\nu)$ to $( m_{\ell
-1},\al_{\ell-1}^\nu)$ in the graph of the coefficients of $f_A$.

 Here is an intuitive way of computing a
supertropical eigenvector. Let $$B_\ell = A + \bt _\ell I.$$ In
\cite[Proposition 7.8]{IzhakianRowen2008Matrices}, we showed that
$\Bl$ is a singular matrix for every tangible root $\bt_\ell$ of
${ f}_A$. Taking an arbitrary vector $w$ and letting $v =
\adj{B_\ell}w,$ we have
$$Av + \bt_\ell v = (A + \bt_\ell I)(\adj{B_\ell} w) = B_\ell \adj{B_\ell} w $$ is ghost.
If $w$ can be chosen such that $v$ is tangible, this implies by
\cite[Lemma~ 7.4]{IzhakianRowen2008Matrices} that $v$ is a
supertropical eigenvector. This is the motivation for the next
result. First we make our discussion more explicit.

\begin{remark}\label{rmk:1} Write $ \Bl = (b_{i,j})$. Thus, $b_{i,j} = a_{i,j}$ for $i\ne j$,
and $b_{i,i} = a_{i,i} + \bt_\ell$.
$$ |\Bl | = ( \al_\ell\bt_\ell^{n-m_\ell})^\nu.$$
The determinant of $\Bl = A + \bt_\ell I$ comes from  the \nmulti
s of maximal weight.  Since $\bt_\ell$ is a tangible root of ${
f}_A$, there are two dominating contributions: One comes from
$n-m_\ell$ entries of $\bt_\ell$ along the diagonal, where the
remaining $m_\ell$ entries must come from the dominating
$m_\ell$-multicycle $\cyc_{\ell}$ in the digraph of $A$. (Note
that for $\ell=t$ this contribution might not be unique.) The
other dominating   term comes from $n-m_{\ell-1}$ entries of
$\bt_\ell$ along the diagonal, where the remaining $m_{\ell-1}$
entries must come from the dominating $m_{\ell-1}$-multicycle
$\cyc_{\ell-1}$ (in the digraph of $A$), and we also have
$$ |\Bl | = ( \al_{\ell-1}\bt_{\ell}^{n-m_{\ell-1}})^\nu$$
(which follows from Equation \eqref{btal}).

Formally take $\a _0 = \rone.$ Applying induction to \eqref{btal}
yields \begin{equation}\label{detrule} \a_{\ell} = \prod _{u
=1}^\ell \beta_u^{ m_{u}-m_{u-1}} ,\end{equation} and thus
\begin{equation}\label{detrule2} |\Bl| = (\beta_\ell^{n- m_{\ell-1}} \prod
_{u =1}^{\ell-1} \beta_u^{ m_{u}-m_{u-1}})^\nu.\end{equation}

\end{remark}

We introduce some more notation: For any root $\bt_\ell$ of ${f_A}
^{\essn}
  $, let \begin{equation}\label{eq:J}
J_\ell = \{ \text{Vertices of } \cyc_\ell\}\setminus \{
\text{Vertices of } \cyc_{\ell -1}\}.
\end{equation} (Note that this definition
is well-defined even for $\ell =t,$ since every \nmulti\ contains
all the vertices $\{1, \dots, n\}$ in the digraph of $A$.) Write
$b'_{i,j}$ for the $(i,j)$ minor of $B_\ell = A+ \bt_{\ell} I$.

\begin{lem}\label{sing11}  $a_{i,i} \le _\nu
\bt _\ell$  for any $i \in J_\ell,$ and thus $$b'_{i,i} =
\al_{\ell-1}\bt_{\ell}^{n-m_{\ell-1}-1},$$ which is tangible and
has the same $\nu$-value as $\frac{|B_\ell|}{\bt_\ell}.$
\end{lem}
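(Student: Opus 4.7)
My plan is to prove the two assertions separately: (a) the inequality $a_{i,i} \le_\nu \bt_\ell$, and (b) the explicit formula for $b'_{i,i}$, from which its tangibility and $\nu$-matching with $|B_\ell|/\bt_\ell$ will follow. Both parts will rely on analyzing the multicycle expansion of the relevant permanent and comparing it with the characterization of $|B_\ell|$ given in Remark~\ref{rmk:1}, namely $|B_\ell|^\nu = (\a_{\ell-1}\bt_\ell^{n-m_{\ell-1}})^\nu = (\a_\ell\bt_\ell^{n-m_\ell})^\nu$, attained uniquely by $\cyc_{\ell-1}$ and $\cyc_\ell$ (together with appropriately many diagonal copies of $\bt_\ell$).

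For (a), I would argue by contradiction. Suppose $a_{i,i} >_\nu \bt_\ell$. Since $i \in J_\ell$, the vertex $i$ does not belong to $\cyc_{\ell-1}$, so I can adjoin the self-loop at $i$ to $\cyc_{\ell-1}$ and obtain an $(m_{\ell-1}+1)$-multicycle of weight $\a_{\ell-1} a_{i,i}$. Combining it with $n-m_{\ell-1}-1$ diagonal copies of $\bt_\ell$ in $B_\ell$ produces a term in $|B_\ell|$ of $\nu$-value $(\a_{\ell-1} a_{i,i} \bt_\ell^{n-m_{\ell-1}-1})^\nu$, strictly exceeding $|B_\ell|^\nu = (\a_{\ell-1}\bt_\ell^{n-m_{\ell-1}})^\nu$, a contradiction.

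For (b), I would expand the permanent of the $(n{-}1)\times(n{-}1)$ submatrix of $B_\ell$ obtained by deleting row and column $i$. After distributing each diagonal term $b_{j,j} = a_{j,j} + \bt_\ell$, every nonzero summand decomposes as $w(\cyc)\bt_\ell^{n-1-m}$, where $\cyc$ is an $m$-multicycle of the digraph of $A$ on a subset of $\{1,\dots,n\}\setminus\{i\}$ and the factor $\bt_\ell^{n-1-m}$ records the remaining diagonal choices. Its $\nu$-value is at most $(\a_m\bt_\ell^{n-1-m})^\nu = (\a_m\bt_\ell^{n-m})^\nu/\bt_\ell^\nu \le |B_\ell|^\nu/\bt_\ell^\nu$, with equality possible only for $m \in \{m_{\ell-1}, m_\ell\}$ since $\bt_\ell$ is a root of $f_A^{\essn}$. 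The case $m = m_\ell$ is ruled out: by tangibility of $\a_\ell$ (for $\ell < t$), $\cyc_\ell$ is the unique $m_\ell$-multicycle of weight $\a_\ell$, and since $i$ is a vertex of $\cyc_\ell$, no $m_\ell$-multicycle on $\{1,\dots,n\}\setminus\{i\}$ reaches weight $\a_\ell$. The case $m = m_{\ell-1}$ contributes $\cyc_{\ell-1}$ itself, which avoids $i$ by definition of $J_\ell$; by the analogous uniqueness from tangibility of $\a_{\ell-1}$, it yields a single dominating tangible summand $\a_{\ell-1}\bt_\ell^{n-1-m_{\ell-1}}$, giving the claimed formula.

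The main obstacle will be the uniqueness arguments: one must invoke the quasi-tangibility of $f_A^{\essn}$ with some care in order to preclude a second multicycle on $\{1,\dots,n\}\setminus\{i\}$ whose contribution $\nu$-matches $\a_{\ell-1}\bt_\ell^{n-1-m_{\ell-1}}$ and would otherwise render $b'_{i,i}$ ghost rather than tangible.
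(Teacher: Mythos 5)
Your part (a) is exactly the paper's argument: adjoin the loop at $i$ to $\cyc_{\ell-1}$ and derive a contradiction with the known $\nu$-value $|B_\ell| = f_A(\bt_\ell)$. For part (b) you take a genuinely different (though ultimately equivalent) route. The paper does not expand the minor $b'_{i,i}$ directly. Instead it uses part (a) to get $b_{i,i}\nucong\bt_\ell$, then expands $|B_\ell|=\sum_k b_{i,k}b'_{i,k}$ and observes that the dominant contribution to $|B_\ell|$ (from $\cyc_{\ell-1}$ plus diagonal entries of $\bt_\ell$) passes through position $(i,i)$ because $i\notin\cyc_{\ell-1}$; this gives $|B_\ell|\nucong b_{i,i}b'_{i,i}$ and hence $b'_{i,i}\nucong|B_\ell|/\bt_\ell$ immediately, after which the uniqueness of $\cyc_{\ell-1}$ finishes the tangibility. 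Your version expands the $(n{-}1)\times(n{-}1)$ permanent into multicycle contributions $w(\cyc)\bt_\ell^{n-1-m}$, bounds each by $\a_m\bt_\ell^{n-1-m}\le_\nu|B_\ell|/\bt_\ell$ with equality only at $m\in\{m_{\ell-1},m_\ell\}$, and then rules out $m_\ell$ since $i$ is a vertex of $\cyc_\ell$. This is more explicit and avoids the Laplace expansion detour, at the cost of having to handle the case $m=m_\ell$ separately; the paper's route gets $b'_{i,i}\nucong|B_\ell|/\bt_\ell$ in one stroke from part (a). One caveat worth addressing in your write-up: when $\cyc_\ell$ uses the self-loop at $i$, deleting that loop produces an $(m_\ell-1)$-multicycle avoiding $i$ whose contribution is not covered by the $m=m_\ell$ case you discard. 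Your bound together with part (a) shows this forces $m_\ell-1=m_{\ell-1}$ and $\cyc_\ell\setminus\{a_{i,i}\}=\cyc_{\ell-1}$, so it coincides with the surviving term, but this should be said. Your closing remark about uniqueness is well placed: the fact that the surviving distributed summand must also correspond to a tangible product of undistributed entries $b_{j,j}$ is precisely where both your argument and the paper's lean on the quasi-tangibility hypothesis.
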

\begin{proof} By definition, $\cyc_{\ell-1}$ occurs in the digraph
of the minor $A_{i,i}$, of weight $\a_{\ell-1},$ so $\cyc_{\ell-1}
\cup \{a_{i,i}\}$ is an $m_{\ell-1} +1$ multicycle of weight
$\a_{\ell-1}a_{i,i},$ and the coefficient of $\la ^{n - (m_{\ell-1}
+1)}$ in $f_A$ must have at least its $\nu$-value. If $a_{i,i} >
_\nu \bt _\ell,$ then $\cyc_{\ell-1}\cup \{a_{i,i}\}$ would produce the single
dominant value for $f_A(\bt_\ell),$ contrary to hypothesis.

It follows that $b_{i,i} = a_{i,i} + \bt_\ell \nucong \bt_\ell.$
Remark~\ref{rmk:1} then implies
$$  |\Bl | \, \nucong \,    \al_{\ell-1}\bt_{\ell}^{n-m_{\ell-1}}
\, \nucong \,  b'_{i,i}b_{i,i} ,$$ and we conclude that $b'_{i,i}
\nucong \frac{|B_\ell|}{\bt_\ell}$. Furthermore, the only terms
which can contribute to $|B_\ell|$ are $
\al_\ell\bt_\ell^{n-m_\ell}$ and
$\al_{\ell-1}{\bt_{\ell}^{n-m_{\ell-1}}}.$ But, by choice of $i$,
$a_{i,i}$ cannot occur in $\cyc_{\ell-1}$. Hence, the only
contribution to $b'_{i,i}$ is
$\al_{\ell-1}\bt_{\ell}^{n-m_{\ell-1}-1}$, as desired.
\end{proof}

\begin{thm}\label{sing12}
  For any root $\bt_\ell$ of   ${f_A}
  $,
 and  for any $i \in J_\ell,$ taking
$v$ to be the $i$ column of $\adj{B_\ell}$, we have $A\hat v
\lmodg \bt_\ell \hat v$. (In other words, $\hat v$ is a
supertropical eigenvector of $A$.)
\end{thm}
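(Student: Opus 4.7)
The plan is to show that $B_\ell \hat v \in \tHz$ for $B_\ell = A + \bt_\ell I$; once this holds, tangibility of $\bt_\ell \hat v$ (both $\bt_\ell$ and the components of $\hat v$ lie in $\tTz$) rewrites $(A + \bt_\ell I)\hat v \in \tHz$ as the desired relation $A\hat v \lmodh \bt_\ell \hat v$. Non-triviality of $\hat v$ is immediate from \lemref{sing11}: since $i \in J_\ell$, the $i$-th component $\Inu{b'_{i,i}} = \al_{\ell-1}\bt_\ell^{n-m_{\ell-1}-1}$ is tangible and nonzero.

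Writing the $j$-th component of $B_\ell \hat v$ as $c_j := \sum_k b_{j,k}\Inu{b'_{i,k}}$, the task reduces to showing each $c_j \in \tGz$. For $j \ne i$, I would run the swap argument underlying \cite[Remark~4.5]{IzhakianRowen2008Matrices} and \propref{tang}. Suppose for contradiction that $c_j$ has a unique tangible dominating summand $b_{j,k_0}\Inu{b'_{i,k_0}}$, and choose a bijection $\sigma_0 : \{1,\dots,n\}\setminus\{i\} \to \{1,\dots,n\}\setminus\{k_0\}$ attaining the maximum in the expansion $b'_{i,k_0} = \sum_\sigma \prod_{\ell \ne i} b_{\ell,\sigma(\ell)}$. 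Setting $k_1 := \sigma_0(j) \ne k_0$, form the companion bijection $\sigma_0'$ with $\sigma_0'(j) = k_0$ and $\sigma_0'(\ell) = \sigma_0(\ell)$ otherwise, so that $\sigma_0'$ misses column $k_1$ and contributes to $b'_{i,k_1}$. The factor-by-factor identity $b_{j,k_1}\prod_{\ell\ne i} b_{\ell,\sigma_0'(\ell)} = b_{j,k_0}\prod_{\ell\ne i}b_{\ell,\sigma_0(\ell)}$ then forces $\nu(b_{j,k_1}\Inu{b'_{i,k_1}}) \ge \nu(b_{j,k_0}\Inu{b'_{i,k_0}})$. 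Either the $k_1$-term strictly dominates, contradicting uniqueness at $k_0$, or both terms are tangible of the same $\nu$-value, summing to a ghost and contradicting tangibility of $c_j$.

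For $j = i$, I would argue directly from the combinatorics of $\cyc_\ell$. By \lemref{sing11} the $k = i$ summand $b_{i,i}\Inu{b'_{i,i}} = \bt_\ell \cdot \al_{\ell-1}\bt_\ell^{n-m_{\ell-1}-1}$ is tangible of $\nu$-value $|B_\ell|^\nu$. Letting $k_0'$ be the out-neighbor of $i$ along $\cyc_\ell$, the path $\cyc_\ell \setminus \{(i,k_0')\}$ completed by the diagonal entries $b_{d,d}$ for $d \notin \cyc_\ell$ is an $(n-1)$-multicycle in $B_\ell^{i,k_0'}$ of $\nu$-weight equal to $|B_\ell|^\nu/\nu(a_{i,k_0'})$, so $\nu(b_{i,k_0'}\Inu{b'_{i,k_0'}}) = |B_\ell|^\nu$ as well. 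Since $\sum_k b_{i,k}b'_{i,k} = |B_\ell|$ caps every summand's $\nu$-value at $|B_\ell|^\nu$, the contributions for $k \notin \{i,k_0'\}$ are strictly smaller, and the two matching tangible dominators at $k=i$ and $k = k_0'$ collapse to a ghost; hence $c_i \in \tGz$.

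The main obstacle is the swap step in the case $j \ne i$: one must check that $\sigma_0(j) \ne k_0$ (automatic, since $k_0$ is the column missed by $\sigma_0$), that $\sigma_0'$ is a valid bijection contributing to $b'_{i,k_1}$, and that the $\nu$-values of the tangible retracts $\Inu{b'_{i,k_0}}$ and $\Inu{b'_{i,k_1}}$ align as claimed. A secondary subtlety, addressed by the hypothesis $i \in J_\ell$ rather than merely $i \in \cyc_\ell$, is the tight structural control over $b_{i,i}$ and $b'_{i,i}$ provided by \lemref{sing11}, without which the two dominating contributions in the diagonal case might fail to cancel exactly.
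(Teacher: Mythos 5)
Your reduction to $B_\ell\hat v \in \tHz$ and the handling of the off-diagonal components $j \ne i$ are sound. For $j \ne i$ the paper simply cites Proposition~\ref{tang} together with \cite[Remark~4.5]{IzhakianRowen2008Matrices}; your swap argument is in effect a re-derivation of those two results and it checks out (the companion bijection $\sigma_0'$ does contribute to $b'_{i,k_1}$ with $k_1 \ne k_0$, and the factor-by-factor identity is correct).

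The gap is in the diagonal component $j = i$, where you depart from the paper's proof-by-contradiction and instead try to exhibit two explicit matching dominators. Your claim that ``by \lemref{sing11} the $k=i$ summand $b_{i,i}\Inu{b'_{i,i}} = \bt_\ell\cdot\al_{\ell-1}\bt_\ell^{n-m_{\ell-1}-1}$ is tangible'' is not what Lemma~\ref{sing11} gives you. That lemma only yields $a_{i,i}\le_\nu\bt_\ell$ and the tangibility of $b'_{i,i}$; it says nothing about $b_{i,i}$. If $a_{i,i}\nucong\bt_\ell$, then $b_{i,i}=a_{i,i}+\bt_\ell$ is \emph{ghost}, your displayed equality fails, and the $k=i$ summand is a ghost, not a tangible term. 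A related omission: you take $k_0'$ to be the out-neighbor of $i$ along $\cyc_\ell$ and treat $k=i$ and $k=k_0'$ as distinct dominators, but if $a_{i,i}$ is a loop of $\cyc_\ell$ then $k_0'=i$ and there is only one summand in sight. This is precisely the degenerate situation the paper's proof pins down: it shows that when a single summand dominates, $k$ is forced to equal $i$ and then $b_{i,i}$ itself must be ghost, which is exactly the case your tangibility claim overlooks. Both of your gaps are closable with the same observation --- when $a_{i,i}\nucong\bt_\ell$ (which subsumes the $k_0'=i$ case) the $k=i$ term is already a ghost of maximal $\nu$-value, so $c_i\in\tGz$ anyway --- but as written your argument asserts tangibility that does not hold, and the case split is missing. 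Your secondary claim that the contributions for $k\notin\{i,k_0'\}$ are ``strictly smaller'' is also unjustified, though harmless, since any additional summand of $\nu$-value $|B_\ell|^\nu$ would only reinforce ghostness.
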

\begin{proof} In view of \cite[Lemma~
7.4]{IzhakianRowen2008Matrices}, it suffices to prove that $A\hat
v + \bt \hat v \in \tHz;$ i.e., that $B_\ell\hat v\in \tHz.$ Write
$b'_{i,j}$ for the $(i,j)$ minor of $B_\ell$. By definition,
  $$\hat v = (\Inu{ b'_{i,1}}, \dots, \Inu{ b'_{i,n}}).$$ In view of
Proposition~\ref{tang} and
\cite[Remark~4.5]{IzhakianRowen2008Matrices}, the $j$ component of
$B_\ell\hat v$ is  ghost unless $i=j$, and it suffices to prove
that $\sum _{k=1}^n b_{i,k}\Inu{ b'_{i,k}}$ is ghost. This is
clear unless the right side has a single dominating summand
$b_{i,k}\Inu{ b'_{i,k}}$. But $\sum _{k=1}^n b_{i,k} b'_{i,k} =
|A|$ is ghost, and is dominated by $b_{i,k} b'_{i,k}$ alone, which thus
must be ghost. Furthermore, by Remark~\ref{rmk:1},
$$b_{i,k} b'_{i,k} \nucong \al_\ell\bt_\ell^{n-m_\ell} \nucong
\al_{\ell-1}\bt_{\ell-1}^{n-m_{\ell-1}};$$ in other words, the two
terms on the right side must occur in $b_{i,k} b'_{i,k}$ as the dominating terms. In
particular, one summand of $b_{i,k} b'_{i,k}$ must consist of
diagonal elements $\bt_\ell$ and the \multi\ $\cyc_{\ell-1}$. But,
by choice of~$i$, $a_{i,k}$~cannot occur in $\cyc_{\ell-1}$; if
$k\ne i$ then $b_{i,k} = a_{i,k}$ cannot occur in this summand, a~ contradiction.

Thus, $k = i$ and Lemma \ref{sing11} shows that $a_{i,i}$ is part
of $\cyc_\ell,$ implying that  $$\cyc_\ell = \cyc_{\ell-1} \cup \{
a_{i,i}\}.$$ But then $\bt_\ell \nucong a_{i,i} ,$ implying
$b_{i,i} = \bt_\ell + a_{i,i} \in \tG,$ and thus $b_{i,i} \Inu{
b'_{i,i}} \in \tG,$  as desired.
\end{proof}

Here is a surprising example.

\begin{example} A matrix $A$ whose characteristic polynomial has distinct roots,
but the supertropical eigenvectors are supertropically dependent.
Let

\begin{equation} A =\left(\begin{matrix} 10 & 10 & 9 & - \\  9 & 1 & -  & - \\-  & - & -  & 9 \\  9 & -  &-  & -  \end{matrix}\right).
\end{equation}
Notation as in Remark~\ref{rmk:1}, \begin{itemize} \item $\cyc_1 =
(1),$ of weight 10, \pSkip  \item $\cyc_2 = (1 ,2),$ of
weight 19, \pSkip \item  $\cyc_3 = (1,3,4 ),$ of weight 27,
\pSkip \item $\cyc_4 = (1,3,4 )(2),$ of weight 28. \end{itemize}

Hence, the characteristic polynomial of $A$ is
$$f_A = \la^4 + 10\la^3 + 19 \la^2 + 27 \la + 28,$$
whose roots are $10,9,8,1$, which are the respective eigenvalues $\beta_1, \bt_2, \bt_3, \bt _4$ of $A$.

We also have $J_1 = \{1\},$ $J_2 = \{2\},$ $J_3 = \{3,4\},$ and
$J_4 = \{2\}.$ (The pathology of this example is explained by the
fact that $J_4 = J_2,$ cf. \eqref{eq:J}).

For each $1 \le \ell \le 4,$ let us compute $B_\ell = A + \bt_\ell I$ and
$\widehat {v_\ell}$, where $v_\ell$ is the column of $\adj {B_\ell}$ corresponding to the $j$ column with $j \in J_\ell.$

\begin{description}
    \item[$\bt_1 = 10$]  $ $ \\
 $B_1 = A + \bt_1 I = \left(\begin{matrix} 10^\nu & 10 & 9 & - \\  9 & 10 & -  & - \\-  & - & 10 & 9 \\  9 & -  &-  & 10\end{matrix}\right),$
 so $\widehat {v_1}=
\(\begin{array}{c}
  30 \\
  29 \\
  28 \\
  29 \\
\end{array}\),$ the first column of $\widehat {\adj {B_1}}$, and  $A \widehat {v_1}= 10  \widehat {v_1}$.\pSkip

\item[$\bt_2 = 9$]  $ $ \\
$B_2 = A + \bt_2 I = \left(\begin{matrix} 10 & 10 & 9 & - \\  9 &
9 & -  & - \\-  & - & 9 & 9 \\  9 & -  &-  & 9
\end{matrix}\right),$ so $ \widehat {v_2}=
\(\begin{array}{c}
  28 \\
  28 \\
  28 \\
  28 \\
\end{array}\),$ the second column
of $\widehat {\adj {B_2}}$, and $A \widehat {v_2}=
\(\begin{array}{c}
  38^\nu \\
  37^{\phantom{\nu}} \\
  37^{\phantom{\nu}} \\
  37^{\phantom{\nu}} \\
\end{array}\) \lmodg 9 \widehat {v_2}$.\pSkip

\item[$\bt_3 = 8$]  $ $ \\
$B_3 = A + \bt_3 I = \left(\begin{matrix} 10 & 10 & 9 & - \\  9 &
8 & -  & - \\-  & - & 8 & 9 \\  9 & -  &-  & 8
\end{matrix}\right),$ so  $v_3=
\(\begin{array}{c}
  25 \\
  26 \\
  27 \\
  26 \\
\end{array}\),$  the third column of
$\widehat {\adj {B_3}}$. (Or we could use the fourth column, which is $
\(\begin{array}{c}
  26 \\
  27 \\
  28 \\
  27 \\
\end{array}\)= 1\cdot v_3,$ so we would obtain the same result.)
 $A \widehat {v_3}=
\(\begin{array}{c}
  36^\nu \\
  34^{\phantom{\nu}}\\
  35^{\phantom{\nu}} \\
  34^{\phantom{\nu}} \\
\end{array}\) \lmodg 8 \widehat {v_3}$.
\pSkip

\item[$\bt_4 = 1$]  $ $ \\
$B_4 = A + \bt_4 I = \left(\begin{matrix} 10 & 10 & 9 & - \\  9 &
1 & -  & - \\-  & - & 1 & 9 \\  9 & -  &-  & 1^\nu
\end{matrix}\right),$ so
 $v_4=
\(\begin{array}{c}
  12 \\
  27 \\
  28 \\
  20 \\
\end{array}\),$ the {\it second}
column of $\widehat {\adj {B_4}}$, and
 $A \widehat {v_4}=
\(\begin{array}{c}
  37^\nu \\
  28^{\phantom{\nu}} \\
  29^\nu \\
  21^\nu \\
\end{array}\) \lmodg 1 \widehat {v_4}$. \pSkip
\end{description}

Combining these four column vectors yields the matrix
$$V = \left(\begin{matrix} 30 & 28 & 25 & 12\\ 29 & 28 & 26 & 27\\ 28 & 28 & 27 & 28\\ 29 & 28 & 26 & 20 \end{matrix}\right),$$
which is singular, having determinant $112^\nu = v_{1,1} v_{2,4} v_{3,3} v_{4,2} =  v_{1,1} v_{2,2} v_{3,4} v_{4,3}.$
\end{example}


\begin{thebibliography}{1}

\bibitem{ABG}
M.~Akian, R.~Bapat, and S.~Gaubert.
\newblock Max-plus algebra, 2008.
\newblock Preprint.

\bibitem{AGG}
M.~Akian,   S.~Gaubert, and A.~Guterman.
\newblock Linear independence over tropical semirings and beyond.
\newblock Contemp. Math., to appear (Preprint at arXiv:math.AC/0812.3496v1).


\bibitem{zur05TropicalAlgebra}
Z.~Izhakian.
\newblock Tropical arithmetic and algebra of tropical matrices.
\newblock {\em Communincation in Algebra},  37:{1-24}, {2009}.
\newblock (preprint at arXiv:math.AG/0505458).


\bibitem{zur05TropicalRank}
Z.~Izhakian.
\newblock The tropical rank of a tropical matrix.
\newblock Preprint at arXiv:math.AC/0604208.


\bibitem{IzhakianRowen2007SuperTropical}
Z.~Izhakian and L.~Rowen.
\newblock Supertropical algebra.
\newblock Preprint at arXiv:0806.1175.

\bibitem{IzhakianRowen2008Matrices}
Z.~Izhakian and L.~Rowen.
\newblock Supertropical matrix algebra.
\newblock {\em Israel J. Math.}, to appear (Preprint at arXiv:0806.1178).

\bibitem{IzhakianRowen2009TropicalRank}
Z.~Izhakian and L.~Rowen.
\newblock The tropical rank of a tropical matrix.
\newblock {\em Communications in Algebra}, to appear.


\end{thebibliography}

\end{document}